\documentclass[11pt,a4paper,reqno]{amsart}

\usepackage{times} % assumes new font selection scheme installed
\usepackage{amsmath} % assumes amsmath package installed
\usepackage{amssymb}  % assumes amsmath package installed
\usepackage{amsthm}
\usepackage{latexsym}
\usepackage{amsfonts,bbm,dsfont,bm}
\usepackage{xcolor}
\usepackage{mathtools}
\usepackage{enumerate}
\usepackage{cite}
\usepackage{url}
\usepackage{tikz}
\usepackage{pgfplots}
\pgfplotsset{compat=1.17}
\usepgfplotslibrary{groupplots}
\usepackage{graphicx,caption}
\usepackage{subcaption}
\usepackage{todonotes}
\usepackage{float}
\usepackage{nicefrac}
\usepackage{mathrsfs}

\usetikzlibrary{arrows.meta,positioning,calc}

\newtheorem{theorem}{Theorem} %avoid in the future

\newtheorem{remark}[theorem]{Remark}
\newtheorem{proposition}[theorem]{Proposition} %avoid in the future
\newtheorem{corollary}[theorem]{Corollary}  %avoid in the future
\newtheorem{lemma}[theorem]{Lemma} %avoid in the future
\newtheorem{definition}[theorem]{Definition} %avoid in the future

\DeclareMathOperator*{\Span}{span}

\usepackage[foot]{amsaddr}

\numberwithin{equation}{section}
\allowdisplaybreaks
\title[Extended dynamic mode decomposition with Fourier dictionaries]{Extended dynamic mode decomposition with Fourier dictionaries: Error bounds and fast implementation}
\author{Felix~Bartel$^{1}$ \and Sandra~Ritter$^{2,3}$ \and Manuel~Schaller$^{2}$ \and Karl~Worthmann$^{3}$}
\address{$^{1}$Mathematical Institute for Machine Learning and Data Science (MIDS), Catholic University of Eichst\"att-Ingolstadt, Germany}
\address{$^{2}$Faculty of Mathematics, Chemnitz University of Technology, Chemnitz, Germany}
\address{$^{3}$Optimization-based Control Group, Institute of Mathematics, Technische Universit\"at Ilmenau, Germany}
\date{\today}

\newcommand{\R}{\ensuremath{\mathbb R}}    % Reelle Zahlen
\newcommand{\C}{\ensuremath{\mathbb C}}    % Komplexe Zahlen
\newcommand{\N}{\ensuremath{\mathbb N}}    % Nat"urliche Zahlen
\newcommand{\Z}{\ensuremath{\mathbb Z}}    % Ganze Zahlen
\newcommand{\T}{\ensuremath{\mathds T}}    % Torus
\newcommand{\calK}{\ensuremath{\mathcal K}}  
\newcommand{\calO}{\ensuremath{\mathcal O}}

\begin{document} %%%%%%%%%%%%%%%%%%%%%%%%%%%%%%%%%%%%%%%%%%%%%%%%%%%%%%%%%%%%%%
	
	\begin{abstract}
		The Koopman operator has gained considerable attention %Koopman-based methods are highly successful 
        in %nonlinear 
        dynamical systems due to its capability to %, as they 
        provide a linear viewpoint for nonlinear %dynamical 
        systems using %that is approachable by 
        data-driven methods %approximation 
        such as extended dynamic mode decomposition (EDMD). 
        In this work, we suggest an EDMD-variant with a Fourier dictionary on the $d$-dimensional torus, where the data are sampled on an equispaced tensor grid. % whose cardinality matches the dimension of the dictionary. 
		In this setting, the EDMD regression problem admits a unique closed-form solution, which we show to coincide with trigonometric interpolation of the Koopman image. This identification has two consequences. 
        First, %in case of smooth dynamics 
        using Koopman invariance of Sobolev spaces, we transfer %established 
        approximation-theoretic results for trigonometric interpolation %transfer directly 
        to derive error bounds on approximations of the Koopman operator. %approximation 
        %and show %yield 
        In particular, the established bounds are %error bounds 
        of optimal order with %fully 
        explicit constants. Second, the EDMD matrix never has to be assembled, %formed, 
        since its action reduces to (nonequispaced) fast Fourier transforms such that the EDMD-approximation may be evaluated matrix-free with quasi-linear cost in the dictionary size. We illustrate the results for the Kuramoto model of coupled oscillators %on $\T^d$ up to $d=5$ 
		with dictionaries of up to $10^7$ modes. %, for which the predicted rates are observed.
	\end{abstract}

    \subjclass[2020]{
        Primary: 37M10; % Computational methods in dynamical systems
        Secondary:
        47B38 % Operators on function spaces arising in dynamical systems
     }
    \keywords{Koopman operator; dynamical systems; extended dynamic mode decomposition (EDMD); Fourier dictionary; Fourier–EDMD; matrix-free computation.}

	\maketitle

	\section{Introduction}\label{sec: intro}
	
	The Koopman operator lifts a nonlinear dynamical system to a linear, but infinite-dimensional, operator acting on observable functions \cite{koopman1931hamiltonian,mezic2005spectral}. Since linear objects are amenable to spectral analysis along with tailored stable long-term prediction and control~\cite{StraWort26}, this viewpoint has become a central tool in data-driven modeling, see the survey \cite{brunton2022modern} and the collection \cite{mauroy2020koopman}. To obtain a computable object, one restricts the operator to a finite-dimensional dictionary of observables and fits its action from data snapshots by least squares, which is the essence of extended dynamic mode decomposition (EDMD) \cite{williams2015data}. From a computational perspective, EDMD requires the solution of a regression problem whose cost usually grows at least quadratically in the dictionary size, since the Gram matrix of the dictionary on the data has to be formed. This may render EDMD-based methods challenging for high-dimensional problems.
	
	In the infinite-data limit, EDMD converges to the $L^2$-orthogonal projection (compression) of the Koopman operator onto the dictionary \cite{korda2018convergence}. Quantitative error bounds for EDMD are comparatively recent. %, see~\cite{philipp2026variance} and the references therein.
    For kernel EDMD \cite{williams2015kernel,klus2020eigendecompositions}, that is, EDMD with the dictionary spanned by a reproducing kernel, deterministic $L^\infty$-bounds in terms of the fill distance of the data were established in \cite{philipp2024error,KohnPhil24,bold2025kernel}. 
    $L^2$-type finite-data bounds on the estimation error in terms of the number of snapshots were derived in \cite{nuske2023finite,philipp2026variance} for a fixed dictionary and in \cite{zhang2024quantitative} for finite-element dictionaries. 
    All of these bounds rest on approximation results for the respective dictionary, that is, on kernel interpolation estimates in native spaces in the $L^\infty$-case and on finite-element approximation combined with sampling recovery in the $L^2$-case. 
    In this work, we build upon a third well-established branch of approximation theory that comes with efficient computations, namely trigonometric approximation of periodic functions.

	On the torus $\T^d$, trigonometric interpolation on an equispaced grid of $(2n+1)^d$ points approximates a function of Sobolev smoothness $s$ with an $L^2$ error of order $n^{-s}$, which is the best possible rate among all algorithms using this number of point evaluations, see \cite{Temlyakov18,dung2018hyperbolic} and, for sharp constants of the underlying Sobolev embeddings, \cite{ullrich2014approximation}. Therein, the interpolation operator does not use any smoothness information and hence adapts automatically to the regularity of the function. This is in contrast to kernel methods, where the kernel usually has to be adapted to obtain a faster convergence rate for higher-order Sobolev spaces.  Moreover, on the torus the fractional Sobolev spaces coincide \emph{isometrically} with the interpolation spaces between $L^2(\T^d)$ and an integer-order Sobolev space \cite{chandler2015interpolation}, such that the approximation results for integer smoothness directly transfer to all intermediate smoothness orders without introducing any additional interpolation constants. 
    % without any loss in the constants
    On the computational side, the evaluation matrix of a Fourier dictionary on an equispaced grid is the $d$-dimensional discrete Fourier transform matrix, which is unitary up to scaling allowing for fast inversion which can be evaluated by means of the fast Fourier transform (FFT) in quasi-linear time \cite{CT65,potts2018numerical}. Evaluating trigonometric polynomials at arbitrary, nonequispaced points is possible at the same cost by the nonequispaced FFT (NFFT) \cite{potts2018numerical,keiner2009nfft,barnett2019parallel}.
    We use both transforms: the FFT to pass between the values of an observable on the grid and its Fourier coefficients, and the NFFT to evaluate the dictionary functions at the image points $F(x)$ of the dynamics $F$, which are in general not equispaced.
	
	This work suggests the implementation of Fourier-based techniques for fast EDMD with error guarantees. To this end, we choose a Fourier dictionary of bandwidth $n$ on $\T^d$ and sample the dynamics on the equispaced grid of $(2n+1)^d$ points, such that the number of data points equals the dimension of the dictionary. The key observation is that the EDMD regression problem then has a unique solution in closed form, and the inverse appearing therein is, up to scaling, the adjoint of a Fourier matrix, such that its application can be realized by means of the FFT. We will show that the resulting EDMD matrix is precisely the matrix representation of trigonometric interpolation composed with the Koopman operator, that is, EDMD with Fourier dictionaries on equispaced grids coincides with trigonometric interpolation of the Koopman image. This identity enables an $L^2$ error analysis, since interpolation estimates transfer directly to the Koopman approximation. 
    
    Fourier dictionaries have been used in Koopman theory and EDMD before, but with different objectives. 
    In \cite{govindarajan2019approximation}, Fourier bases and the FFT are used to approximate Koopman spectra of measure-preserving maps on the torus, where the focus lies on spectral convergence rather than on quantitative operator error bounds. 
    Furthermore, \cite{nuske2023efficient} uses random Fourier features \cite{rahimi2007random} to reduce the computational cost of kernel EDMD and interprets the resulting method as EDMD with a Fourier dictionary with randomly drawn frequencies.
    In contrast to these approaches, we exploit a deterministic, structured Fourier dictionary together with its corresponding equispaced grid, which enables both FFT-based computation and a priori error estimates.
  
	%We stress that the price for constructivity is the restriction to a structured, equispaced grid instead of arbitrary data sets, a trade-off which we will discuss in Remark~\ref{rem: kEDMD comparison}.
	
	\textbf{Contribution.} This paper features two main contributions, the first being the closed-form solution of EDMD with Fourier dictionaries together with its matrix-free implementation based on the FFT and the NFFT, the second being error bounds of the form $\|\calK - \widehat\calK\|_{H^s(\T^d)\to L^2(\T^d)} \le C n^{-s}$ for two distinct Koopman approximants, one for the case that the observable is measured along the dynamics and one for the fully data-driven case that only its values on the grid are available. The rate is optimal and the constant $C$ is explicit in terms of the map $F$, its smoothness, the dimension $d$ and the smoothness $s$ of the observable, such that the bound can be evaluated a priori. As by-products we obtain an error estimate for trigonometric interpolation in Sobolev spaces with explicit constants, boundedness of the Koopman operator on Sobolev spaces on periodic domains of fractional order, and the isometric identification of these spaces with interpolation spaces. 
	
	\textbf{Outline}. The paper is organized as follows. In Section~\ref{sec: Fourier EDMD} we recall EDMD and derive the closed-form solution for Fourier dictionaries on equispaced grids. In Section~\ref{sec: EDMD as interpolation} we identify this solution with trigonometric interpolation and introduce the two Koopman approximants. Section~\ref{sec: error analysis} contains the error analysis, and Section~\ref{sec: numerics} the numerical experiments for the Kuramoto model. Section~\ref{sec: conclusion} concludes the paper. %\cite{kuramoto1975international,strogatz2000kuramoto} on $\T^d$ up to $d=5$ with up to roughly $10^7$ Fourier modes, where the predicted rates are observed and the cost is confirmed to be quasi-linear in the dictionary size.

	\section{Extended dynamic mode decomposition with Fourier dictionaries}\label{sec: Fourier EDMD}
	
	Let $\T^d=\R^d/\Z^d$ be the $d$-dimensional torus represented in the Euclidean space by $\T^d=~[0,1]^d$ with opposite faces identified.
	We consider a discrete time dynamical system
	\begin{equation*}
		x^+ = F(x)
	\end{equation*}
	with $F : \mathds T^d\to \mathds T^d$ a continuous, potentially nonlinear map.
	Then $F$ induces the linear \emph{Koopman operator} $\mathcal{K}$ mapping functions on $\T^d$ to functions on $\T^d$ via
	\begin{equation}\label{eq:Koopman}
		\mathcal{K} f := f \circ F,\qquad f:\T^d \to \C.
	\end{equation}
	The key observation is that, after the application of the Koopman operator, we follow the evolution of observables $f:\T^d\to\C$ instead of the state and that this propagation is linear even if $F$ is nonlinear. The tradeoff is that the Koopman-lifted system is, in general, infinite-dimensional. 

    \subsection{EDMD}
	We briefly recall \textit{Extended Dynamic Mode Decomposition} (EDMD;  \cite{williams2015data}), a widely-used data-driven method to approximate the Koopman operator.
	Assume that we are given $M \in\mathbb{N}$ data snapshots $(x_i,y_i)$ with $y_i=F(x_i)$, $i=1,...,M$, and define the sets
	\begin{equation*}
		 X=\{x_1,...,x_M\} \quad \text{and} \quad  Y=\{y_1,...,y_M\}.
	\end{equation*}
	Choosing a set of linearly independent observables $\psi_k:\T^d \to \C$, $k=1,...,N$, we define the finite-dimensional space, often referred to as \textit{dictionary}, by
	\begin{equation*}
		\mathbb{V}_N \coloneqq \Span\{\psi_1,...,\psi_N\}.
	\end{equation*}
    Directly from the Koopman identity \eqref{eq:Koopman}, we obtain
    \begin{align*}
		\mathcal{K}\psi_k(x_j) = \psi_k(F(x_j)) = \psi_k(y_j).
	\end{align*}
	% such that the matrix $\Psi_Y$ contains the sampled Koopman images of the dictionary basis functions. 
	Therefore, EDMD seeks a matrix $K\in\mathbb{C}^{N \times N}$ such that 
	\begin{align}\label{eq:approx1}
		\Psi_Y \approx K\Psi_X
	\end{align}
    where $\Psi(x) = (\psi_1(x),...,\psi_N(x))^\top$ collects the evaluation of the dictionary functions at the data snapshots and
	\begin{align*}
		\Psi_X=\left[\Psi(x_1),...,\Psi(x_M)\right], \quad \Psi_Y=\left[\Psi(y_1),...,\Psi(y_M)\right]\in\mathbb{C}^{N\times M}.
	\end{align*}
	The approximation \eqref{eq:approx1} is then performed by solving the least squares problem 
	\begin{equation} \label{eq:LS}
		\min_{K\in\mathbb{C}^{N \times N}} \|K\Psi_X - \Psi_Y\|_F^2 = \min_{K\in\mathbb{C}^{N \times N}} \sum_{i=1}^M \|K\Psi(x_i) - \Psi(y_i)\|_2^2. 
	\end{equation}
	Here, the Frobenius norm corresponds to the empirical $L^2$-norm induced by the sampled data. Consequently, the minimizer yields the best least-squares fit of the Koopman action on the dictionary at the sample points.
	Assuming that $\Psi_X$ has full row rank, the solution of the least squares problem  is given by
	\begin{equation}\label{eq:standard_EDMD}
		K = \Psi_Y\Psi_X^\dagger = \Psi_Y\Psi_X^\ast(\Psi_X\Psi_X^\ast)^{-1}.
	\end{equation}
	% is the matrix representation of the discrete projection $P^X_N\mathcal{K}|_{\mathbb{V}_N}$. In the infinite-data limit, $M\to\infty$, convergence of $P^X_N\mathcal{K}|_{\mathbb{V}_N}$ to the $L^2$-orthogonal projection $P_N\mathcal{K}|_{\mathbb{V}_N}$ is shown in \cite{korda2018convergence}.
	This matrix $K$ represents a Monte-Carlo-type data-based approximation of the action of the Koopman operator restricted to the dictionary basis $\{\psi_1,...,\psi_N\}$. As shown in \cite{korda2018convergence}, under suitable assumptions, this approximation converges to the $L^2$-orthogonal projection $P_N\mathcal{K}|_{\mathbb{V}_N}$ as the number of data points tends to infinity.
	% and the finite-dimensional approximation $\widehat\calK: \mathbb{V}_N \to \mathbb{V}_N$ of the Koopman operator is then defined by 
	% \begin{equation*}
		%     \widehat\calK \psi = c^{\mathsf H} K \Psi
		% \end{equation*}
	% for any observable $\psi = c^{\mathsf H} \Psi \in \mathbb{V}_N$ in the dictionary. 

	\subsection{EDMD with Fourier dictionaries on equispaced grids} 
    
	In this work, we show how Fourier dictionaries are particularly well-suited for fast and provably optimal approximations in terms of the number of samples.
	For a multi-index $ k=(k_1,\ldots,k_d)^\top \in\mathbb Z^d$ we define the Fourier basis functions
	\begin{equation}\label{eq: Fourier basis}
		\psi_{ k}( x) = \exp\!\big( 2\pi i \langle  k,  x\rangle \big), \qquad  x\in\mathds T^d.
	\end{equation}
	where $\langle  k,  x\rangle\coloneqq\sum_{j=1}^d k_j x_j$.
    The family $\{\psi_k\}_{k\in\Z^d}$ forms an orthonormal basis of $L^2(\T^d)$.
	For a fixed bandwidth $n\in\mathbb N$ we consider the finite dictionary
	\begin{equation}\label{eq:Fdict}
		\mathbb{V}_n = \mathrm{span}\{ \psi_{ k} \;:\;  k\in K_n\},
		\qquad 
		K_n := \{  k\in\mathbb Z^d \;:\; \| k\|_\infty \le n \}.
	\end{equation}
	Thus, we have $\dim(\mathbb V_n)=(2n+1)^d \eqqcolon N$.
	To compute the approximation via EDMD, we consider a tensor-product equispaced sampling grid given by
	\begin{equation}\label{eq: grid d-dim}
		 X = \left\{  x_{ j} = \left(\frac{j_1}{2n+1},...,\frac{j_d}{2n+1}\right)           \;:\;  j_1,...,j_d=0,\ldots,2n \right\} \;\subset\;\mathds T^d.
	\end{equation}
	Therein, the points in $ X$ are indexed by multi-indices $ j\in \mathcal I_n$ with $\mathcal{I}_n := \{0,\ldots,2n\}^d$.
	Importantly, the number of $N=(2n+1)^d$ grid points matches the dimension of the Fourier dictionary, that is, $| X| = \dim(\mathbb{V}_n)$. 

	Given data snapshots $( x_j,F( x_j))$ for $ x_j\in X$, $j\in \mathcal I_n$
	and collecting the evaluation of the dictionary functions at the data snapshots in the $N\times N$ matrices
	\begin{equation*}
		( L_{ X})_{ k, j} = \psi_{ k}( x_{ j})
		\qquad \text{and} \qquad
		( L_{ X^+})_{ k,  j} = \psi_{ k}(F( x_{ j})),
	\end{equation*}
	where rows are indexed by frequencies $ k\in K_n$ and columns by data points $ j\in \mathcal{I}_n$, the EDMD least squares problem (see \eqref{eq:LS}) becomes
	\begin{equation*} 
		\min_{ K\in\mathbb{C}^{N \times N}} \|K L_{ X} -  L_{ X^+}\|_F^2.
	\end{equation*}
    Due to the particular structure of the dictionary and the sample points, the matrix $L_X$ has favorable properties allowing for fast inversion that lay the foundation for fast Fourier methods.
    This is a consequence of the orthogonality of the Fourier basis on the equispaced grid: by \cite[Lemma 4.66]{potts2018numerical}, we have for every $ m\in\mathds Z^d$
	\begin{equation}\label{eq: character}
		\frac{1}{N}\sum_{ j\in \mathcal{I}_n} \psi_{ m}( x_{ j})
		= \begin{cases}
			1, &  m\in (2n+1)\mathds Z^d ,\\
			0, & \text{otherwise.}
		\end{cases}
	\end{equation}

	\begin{lemma}\label{lem: Fourier orthogonality}
		Let $ X$ be the uniform tensor-product grid defined in \eqref{eq: grid d-dim}. Then
		\begin{equation*}
			 L_{ X}  L_{ X}^\ast = N\, I_{N}
		\end{equation*}
		such that $ L_{ X}$ is invertible with
		\begin{equation*}
			 L_{ X}^{-1} = \frac{1}{N}\,  L_{ X}^\ast .
		\end{equation*}
	\end{lemma}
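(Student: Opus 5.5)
We compute the entries of $L_X L_X^\ast$ directly and then reduce them to the character identity \eqref{eq: character}. For frequencies $k,\ell\in K_n$, the definition of $L_X$ gives
\begin{equation*}
(L_X L_X^\ast)_{k,\ell} = \sum_{j\in\mathcal I_n} \psi_k(x_j)\,\overline{\psi_\ell(x_j)} = \sum_{j\in\mathcal I_n} \psi_{k-\ell}(x_j).
\end{equation*}
Here we use the multiplicativity $\psi_k\overline{\psi_\ell}=\psi_{k-\ell}$, which follows immediately from \eqref{eq: Fourier basis}.

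Applying \eqref{eq: character} with $m=k-\ell$, the entry equals $N$ if $k-\ell\in(2n+1)\Z^d$ and $0$ otherwise. So the only real step is to show that, for $k,\ell\in K_n$, the condition $k-\ell\in(2n+1)\Z^d$ forces $k=\ell$. Since $\|k\|_\infty,\|\ell\|_\infty\le n$, each component satisfies $|k_i-\ell_i|\le 2n<2n+1$. A multiple of $2n+1$ with absolute value at most $2n$ must be zero, so $k_i=\ell_i$ for every $i$. This gives $L_XL_X^\ast = N I_N$. We expect this aliasing check to be the only point needing care, and it is exactly where the match between $K_n$ and the grid size $2n+1$ enters.

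Finally, $L_X$ is square of size $N\times N$, because $|K_n|=|\mathcal I_n|=N$. The identity $L_X\bigl(\tfrac1N L_X^\ast\bigr)=I_N$ therefore exhibits a right inverse. For square matrices a right inverse is also a two-sided inverse, so $L_X$ is invertible with $L_X^{-1}=\tfrac1N L_X^\ast$.
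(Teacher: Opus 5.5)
Your proposal is correct and follows the paper's proof essentially verbatim: you compute $(L_XL_X^\ast)_{k,\ell}=\sum_{j\in\mathcal I_n}\psi_{k-\ell}(x_j)$, apply \eqref{eq: character}, and use $\|k-\ell\|_\infty\le 2n<2n+1$ to force $k=\ell$. Your closing remark that a right inverse of a square matrix is two-sided simply spells out the invertibility step the paper leaves implicit.
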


	\begin{proof}
		For $ k, \ell\in K_n$ we have $( L_{ X} L_{ X}^\ast)_{ k, \ell} = \sum_{ j\in \mathcal{I}_n}\psi_{ k- \ell}( x_{ j})$, and $\| k- \ell\|_\infty\le 2n < 2n+1$ implies that $ k- \ell\in (2n+1)\mathds Z^d$ if and only if $ k= \ell$.  Hence \eqref{eq: character} yields $ L_{ X} L_{ X}^\ast = N I_{N}$.
	\end{proof}
	
	Consequently, using a Fourier dictionary and equispaced points, the inverse entering the solution of the EDMD regression problem is known analytically and therefore never has to be computed. In particular, it has a very simple form and is expressed only using the data matrices $L_X$ and $L_{X^+}$. 
	
	\begin{corollary}\label{cor: Fourier EDMD matrix}
		Let $\mathbb{V}_n$ be the Fourier dictionary and let $ X$ be the equispaced tensor-product grid defined in \eqref{eq: grid d-dim}.
		Denote by $ L_{ X},  L_{ X^+}$ the corresponding Fourier evaluation matrices.
		Then the EDMD least-squares problem admits the unique solution
		\begin{equation}\label{eq:FKoop}
			 K =  L_{ X^+}  L_{ X}^{-1}
			= \frac{1}{N} \,  L_{ X^+}  L_{ X}^\ast .
		\end{equation}
	\end{corollary}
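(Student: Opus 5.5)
The plan is to use that $L_X$ is square and invertible by Lemma~\ref{lem: Fourier orthogonality}, so the least-squares problem can be solved exactly with zero residual. The closed form then follows from the explicit inverse.

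\emph{Existence and minimality.} Since $N=|X|=\dim(\mathbb V_n)$, the matrix $L_X$ is $N\times N$. Lemma~\ref{lem: Fourier orthogonality} gives $L_X L_X^\ast = N I_N$, so $L_X$ is invertible with $L_X^{-1}=\frac1N L_X^\ast$. Set $K_\star := L_{X^+}L_X^{-1}$. Then $K_\star L_X - L_{X^+} = 0$, so the objective $\|KL_X-L_{X^+}\|_F^2\ge 0$ attains the value $0$ at $K_\star$. Hence $K_\star$ is a minimizer.

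\emph{Uniqueness.} Let $K$ be any minimizer. Its objective value must also be $0$, so $KL_X=L_{X^+}$. Multiplying from the right by $L_X^{-1}$ gives $K=L_{X^+}L_X^{-1}=K_\star$. Alternatively, the objective is a strictly convex quadratic in $K$, because $K\mapsto KL_X$ is injective.

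\emph{Formula and consistency.} Substituting the inverse from Lemma~\ref{lem: Fourier orthogonality} yields $K=\frac1N L_{X^+}L_X^\ast$, which is \eqref{eq:FKoop}. This agrees with the general formula \eqref{eq:standard_EDMD}: $L_X$ has full row rank, and
\[
L_{X^+}L_X^\ast(L_XL_X^\ast)^{-1}=\tfrac1N L_{X^+}L_X^\ast .
\]

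There is no real obstacle here. The only point requiring care is noting that the square shape ($M=N$) makes exact interpolation possible, which is what forces uniqueness.
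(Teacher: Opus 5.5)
Your proposal is correct and matches the paper. The paper gives no separate proof and treats the corollary as an immediate consequence of Lemma~\ref{lem: Fourier orthogonality} inserted into the general formula \eqref{eq:standard_EDMD}, which is exactly your final consistency check; your zero-residual argument is a slightly more self-contained way to obtain existence and uniqueness, since it needs only the invertibility of the square matrix $L_X$ and not the normal equations.
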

	
	The explicit representation of the EDMD matrix $ K$ obtained in Corollary \ref{cor: Fourier EDMD matrix} is the key structural property of Fourier-EDMD on equispaced grids. 
    It has two fundamental consequences that constitute the core of this work.
	
	First, $L_X^{-1} = \tfrac{1}{N} L^*_X$ admits an interpretation as reconstruction of trigonometric polynomials from their values on the sampling grid. In particular, the operator associated with $K$ can be interpreted as the composition which maps sampled data to reconstructed Fourier representations after transport by the dynamics $F$. As we show in Section~\ref{sec: EDMD as interpolation}, this leads to an exact characterization of Fourier-EDMD in terms of trigonometric interpolation, which allows us to transfer classical results from approximation theory to the analysis of EDMD. 
	To be precise, the matrix $ K$ induces a linear operator
	\begin{equation*}
		\calK_n : \mathbb{V}_n \to \mathbb{V}_n , \qquad (\calK_n f)(x) = ( K^\top\widetilde f)^\top \Psi(x) = \widetilde f \,^\top  K \Psi(x)
	\end{equation*}
	where $\widetilde f$ denotes the coefficient vector of $f\in\mathbb{V}_n$, $f(x) = \widetilde f \,^\top \Psi(x)$.
	The operator $\calK_n$ represents the finite-dimensional Fourier-EDMD approximation of the Koopman operator $\calK$ on the space $\mathbb{V}_n$. 
	In Section \ref{sec: error analysis} it will be shown that, under suitable Sobolev regularity assumptions, the approximation error satisfies an error bound of the form
	\begin{equation*}
		\|(\calK - \calK_n) f\|_{L^2(\T^d)} \le Cn^{-s} \|f\|_{H^s(\T^d)}, \quad \forall f\in\mathbb{V}_n
	\end{equation*}
	where the constant $C>0$ is explicitly known. Using a suitable projection from the right, we will further show that a similar estimate also holds for functions in $H^s(\T^d)$.
	Here, we stress that the rate $n^{-s}$ is optimal in the worst-case setting for observables with Sobolev smoothness $s$ and $(2n+1)^d$ point evaluations.
	
	Second, the matrix $L_X$ possesses a discrete Fourier transform structure. Consequently, both $L_X$ and $L_X^{-1}$ and hence the EDMD approximation $ K$ can be applied by means of multidimensional FFTs and NFFTs, allowing for highly-efficient matrix-free implementations with quasi-linear complexity $\mathcal{O}(N\log N)$ with the total number of points and dictionary size $N=(2n+1)^d$ that enables computations for high-dimensional dictionaries, as illustrated in Section~\ref{sec: numerics}.
	
	\section{EDMD as trigonometric interpolation} \label{sec: EDMD as interpolation}
	
	Corollary~\ref{cor: Fourier EDMD matrix} shows that the Fourier-EDMD approximation is completely determined by the Fourier sampling matrices $L_X$ and $L_{X^+}$. 
	The purpose of this section is to show that Fourier-EDMD on equispaced grids coincides with trigonometric interpolation of Koopman images. This identification forms the basis for the error analysis performed in Section~\ref{sec: error analysis}.
	
	\subsection{Trigonometric interpolation on equispaced grids}

    We first recall the trigonometric interpolation operator associated with the equispaced grid $X$.
	
	\begin{definition} \label{def: trigonometric interpolation}
		Let $ X$ be the equispaced tensor-product grid \eqref{eq: grid d-dim}.
		The trigonometric interpolation operator
		\begin{equation*}
			S_n : C(\T^d)\to \mathbb V_n
		\end{equation*}
		is defined by assigning to every $f\in C(\T^d)$ the unique
		trigonometric polynomial $S_nf\in\mathbb V_n$ satisfying
		\begin{equation*}
			(S_nf)( x_{ j}) = f( x_{ j}), \qquad  j\in\mathcal I_n.
		\end{equation*}
	\end{definition}
	To provide a relation of the trigonometric interpolation operator $S_n$ with the matrix $L_X$ involved in the EDMD surrogate, let
	\begin{equation*}
		\Psi( x) := (\psi_{ k}( x))_{ k\in K_n}.
	\end{equation*}
	Then, by definition, every $p\in\mathbb V_n$ admits a unique representation $p( x) = c^\top \Psi( x)$ for a coefficient vector $c=(c_{ k})_{ k\in K_n}\in\mathbb C^{N}$.
	For a continuous function $f\in C(\T^d)$ we denote by
	\begin{equation*}
		f_X = \bigl( f( x_{ j}) \bigr)_{ j\in\mathcal I_n} \in\mathbb C^{N}
	\end{equation*}
	the vector of its values on the interpolation grid.
	
	\begin{lemma}\label{lem: interpolation coefficients}
		Let $f\in C(\T^d)$ and let $S_nf( x) = \widetilde f \,^\top\Psi( x)\in\mathbb V_n$ be the trigonometric interpolant of $f$ with coefficient vector $\widetilde f\in\mathbb C^{N}$. 
		Then, the coefficients satisfy
		\begin{equation*}
			\widetilde f \,^\top = \frac1{N}\, f_X^\top L_X^\ast.
		\end{equation*}
		In particular, $S_nf$ admits the representation
		\begin{equation*}
			S_nf( x) 
			% = f_X^\top L_X^{-1}\Psi( x) 
			= \frac1{N} f_X^\top L_X^\ast\Psi( x).
		\end{equation*}
	\end{lemma}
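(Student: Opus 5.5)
The plan is to reduce the interpolation conditions in Definition~\ref{def: trigonometric interpolation} to a linear system in the coefficient vector and to solve that system with Lemma~\ref{lem: Fourier orthogonality}.

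First, write $S_nf(x)=\widetilde f^{\,\top}\Psi(x)$ and evaluate at the grid point $x_j$. The $j$-th column of $L_X$ is exactly $\Psi(x_j)$, because $(L_X)_{k,j}=\psi_k(x_j)$. Hence the vector of values of $S_nf$ on the grid is $\widetilde f^{\,\top}L_X$, read as a row vector indexed by $j\in\mathcal I_n$. The interpolation conditions $(S_nf)(x_j)=f(x_j)$ for all $j\in\mathcal I_n$ are therefore equivalent to the single matrix identity
\[
\widetilde f^{\,\top}L_X=f_X^\top .
\]

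Second, Lemma~\ref{lem: Fourier orthogonality} says that $L_X$ is invertible with $L_X^{-1}=\frac1N L_X^\ast$. The identity above therefore has the unique solution
\[
\widetilde f^{\,\top}=f_X^\top L_X^{-1}=\frac1N f_X^\top L_X^\ast .
\]
This argument also justifies the existence and uniqueness claimed in Definition~\ref{def: trigonometric interpolation}: the set $\{\psi_k\}_{k\in K_n}$ is a basis of $\mathbb V_n$, so $p\mapsto(p(x_j))_{j\in\mathcal I_n}$ is a linear bijection from $\mathbb V_n$ onto $\mathbb C^N$.

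Finally, substituting $\widetilde f^{\,\top}$ into $S_nf(x)=\widetilde f^{\,\top}\Psi(x)$ gives $S_nf(x)=\frac1N f_X^\top L_X^\ast\Psi(x)$, which is the representation claimed in Lemma~\ref{lem: interpolation coefficients}. No real obstacle arises here. The only point needing care is the index bookkeeping: rows of $L_X$ are indexed by frequencies $k\in K_n$ and columns by grid points $j\in\mathcal I_n$, and one must keep transposes and conjugate transposes distinct so that the result reads $f_X^\top L_X^\ast$ and not $f_X^\top \overline{L_X}$ or some other variant.
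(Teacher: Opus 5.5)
Your proposal is correct and follows the paper's proof essentially step for step: you write the interpolation conditions as $\widetilde f^{\,\top}L_X=f_X^\top$ using that the $j$-th column of $L_X$ is $\Psi(x_j)$, and then invert with $L_X^{-1}=\frac1N L_X^\ast$ from Lemma~\ref{lem: Fourier orthogonality}. Your additional remark that invertibility of $L_X$ also gives the existence and uniqueness of the interpolant assumed in Definition~\ref{def: trigonometric interpolation} is a valid extra.
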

	
	\begin{proof}
		For every $ j\in\mathcal I_n$ we have
		\begin{equation*}
			S_nf( x_{ j}) = \widetilde f \,^\top\Psi( x_{ j}) = \widetilde f \,^\top (L_X)_{:, j}
		\end{equation*}
		since the $ j$-th column of $L_X$ is $\Psi( x_{ j})$.
		Collecting these values for all $ j\in\mathcal I_n$ yields
		\begin{equation*}
			(S_nf)_X^\top = \widetilde f \,^\top L_X.
		\end{equation*}
		Moreover, since $S_nf$ interpolates $f$ on the grid $ X$, we have
		\[
		f_X^\top = (S_nf)_X^\top = \widetilde f \,^\top L_X
		\]
		and hence
		\begin{equation*}
			\widetilde f \,^\top = f_X^\top (L_X)^{-1} = \frac1{N}\, f_X^\top L_X^\ast
		\end{equation*}
		where the second equality holds by Lemma \ref{lem: Fourier orthogonality}. 
	\end{proof}
	% Therefore, the interpolant admits the representation
	% \[
	% S_nf( x) = f_X^\top L_X^{-1}\Psi( x) = \frac1{(2n+1)^d} f_X^\top L_X^\ast\Psi( x).
	% \]
	
	The representation above provides an explicit characterization of trigonometric interpolation in terms of the Fourier evaluation matrix. The matrix $L_X^{-1}$ maps function values on the grid to Fourier coefficients, while the feature vector $\Psi(x)$ evaluates the resulting trigonometric polynomial at arbitrary points. Next, we show that the EDMD approximation obtained in Section~\ref{sec: Fourier EDMD} coincides with trigonometric interpolation applied to Koopman images.

	\subsection{The EDMD matrix as trigonometric interpolation}
    
    First, we show that the EDMD approximant preserves the fundamental property of the Koopman operator propagating observable functions along the flow.
	\begin{lemma}\label{lem:feature-propagation}
		For every sampling point $ x_{ j}\in X$, $j\in \mathcal I_n$,
		\begin{equation} \label{eq: discrete Koopman identity}
			 K\Psi( x_{ j}) = \Psi(F( x_{ j})).
		\end{equation}
	\end{lemma}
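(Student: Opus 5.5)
The identity follows directly from the closed-form representation of the EDMD matrix in Corollary~\ref{cor: Fourier EDMD matrix}, together with the fact that $L_X$ is invertible (Lemma~\ref{lem: Fourier orthogonality}). The plan is to rewrite the pointwise statement \eqref{eq: discrete Koopman identity} as a matrix identity over all grid points at once, and then to read off the columns.

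First, recall the column structure of the data matrices. By definition, the $j$-th column of $L_X$ is $\Psi(x_j)$, and the $j$-th column of $L_{X^+}$ is $\Psi(F(x_j))$. Both matrices have rows indexed by $k\in K_n$ and columns indexed by $j\in\mathcal I_n$. Hence $K\Psi(x_j)$ is the $j$-th column of the product $K L_X$. The claim for all $j\in\mathcal I_n$ simultaneously is therefore equivalent to
\begin{equation*}
K L_X = L_{X^+}.
\end{equation*}

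Second, insert $K = L_{X^+}L_X^{-1}$ from \eqref{eq:FKoop}. This gives
\begin{equation*}
K L_X = L_{X^+} L_X^{-1} L_X = L_{X^+}.
\end{equation*}
Comparing the $j$-th columns of both sides yields $K\Psi(x_j) = \Psi(F(x_j))$ for every $j\in\mathcal I_n$. Equivalently, one can use the form $K=\frac1N L_{X^+}L_X^\ast$ together with $L_X^\ast L_X = N I_N$. The latter holds because a square matrix with $L_XL_X^\ast = NI_N$ also satisfies $L_X^\ast L_X = NI_N$.

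There is no real obstacle here. The only point needing care is the consistency of the indexing convention: rows correspond to frequencies and columns to grid points, so that $K$ acts on the feature vector $\Psi(x_j)$ from the left. One should also note the interpretation of the result. Since $L_X$ is square and invertible, the least-squares residual in the EDMD problem is exactly zero. Thus EDMD reproduces the Koopman action on the dictionary exactly at the sample points.
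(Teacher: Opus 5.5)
Your proposal is correct and matches the paper's proof: both right-multiply the closed form $K = L_{X^+}L_X^{-1}$ from Corollary~\ref{cor: Fourier EDMD matrix} by $L_X$ to get $KL_X = L_{X^+}$, and then compare $j$-th columns. Your side remarks are also accurate, namely that $L_X^\ast L_X = N I_N$ holds for the square matrix $L_X$ and that the least-squares residual vanishes.
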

	
	\begin{proof}
		By Corollary~\ref{cor: Fourier EDMD matrix}, that is, right-multiplication of \eqref{eq:FKoop} with $L_X$, we have $ KL_X=L_{X^+}$ such that the result follows.
	\end{proof}
	Lemma~\ref{lem:feature-propagation} shows that the EDMD matrix reproduces the action of the dynamical system on the feature vectors at the sampling points. In other words, the finite-dimensional relation \eqref{eq: discrete Koopman identity}
	is the discrete analogue of the Koopman identity
	\begin{equation*}
		(\mathcal Kf)(x) = f(F(x)).
	\end{equation*}
	Consequently, the EDMD approximation may be interpreted as propagating Fourier feature vectors through the dynamics and subsequently reconstructing a trigonometric polynomial from the propagated data.
	
	\begin{proposition}\label{prop:edmd-interpolation}
		Let $f(x)=\widetilde f\,^\top\Psi(x)\in\mathbb V_n$.
		Then
		\begin{equation*}
			(S_n\mathcal Kf)(x) = \widetilde f\,^\top  K\Psi(x),
		\end{equation*}
		i.e. the matrix $ K$ represents the operator $S_n \mathcal K|_{\mathbb V_n}$ with respect to the Fourier basis of \ $\mathbb V_n$, see \eqref{eq:Fdict}.
	\end{proposition}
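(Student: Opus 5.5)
The plan is to use Lemma~\ref{lem: interpolation coefficients} applied to the continuous function $g := \mathcal K f = f\circ F$. Since $f\in\mathbb V_n$ is a trigonometric polynomial and $F$ is continuous, $g\in C(\T^d)$, so $S_n g$ is well defined. The lemma then gives
\begin{equation*}
	(S_n\mathcal K f)(x) = \frac1N\, g_X^\top L_X^\ast \Psi(x).
\end{equation*}

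It therefore suffices to identify the grid values $g_X$ in terms of $\widetilde f$ and $L_{X^+}$. For each $j\in\mathcal I_n$ we have $g(x_j) = f(F(x_j)) = \widetilde f\,^\top \Psi(F(x_j))$. Since $\Psi(F(x_j))$ is exactly the $j$-th column of $L_{X^+}$, collecting over $j$ yields $g_X^\top = \widetilde f\,^\top L_{X^+}$.

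Substituting this gives
\begin{equation*}
	(S_n\mathcal K f)(x) = \widetilde f\,^\top \tfrac1N L_{X^+} L_X^\ast\Psi(x) = \widetilde f\,^\top K\Psi(x),
\end{equation*}
where the last equality is \eqref{eq:FKoop} from Corollary~\ref{cor: Fourier EDMD matrix}. Since this holds for every $f\in\mathbb V_n$, comparing with the definition of $\calK_n$ shows that $K$ is the matrix representation of $S_n\mathcal K|_{\mathbb V_n}$ in the Fourier basis, in the same transposed-coefficient convention used to define $\calK_n$.

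There is no real obstacle here. The only care needed is bookkeeping of the transpose convention: coefficients act from the left as $\widetilde f\,^\top$, and the columns of $L_X$ and $L_{X^+}$ are indexed by grid points. An equivalent route is to check that both sides lie in $\mathbb V_n$ and agree on the grid $X$. Indeed, Lemma~\ref{lem:feature-propagation} gives $\widetilde f\,^\top K\Psi(x_j)=\widetilde f\,^\top\Psi(F(x_j)) = (\mathcal K f)(x_j)$, and uniqueness of the interpolant (Definition~\ref{def: trigonometric interpolation}, ensured by invertibility of $L_X$ in Lemma~\ref{lem: Fourier orthogonality}) then concludes. I would use this second argument as a cross-check.
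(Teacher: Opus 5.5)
Your proof is correct. Your main argument is a direct computation: you insert the grid values $g_X^\top=\widetilde f\,^\top L_{X^+}$ of $g=f\circ F$ into the explicit interpolation formula of Lemma~\ref{lem: interpolation coefficients}, and then recognize $\frac1N L_{X^+}L_X^\ast=K$ from Corollary~\ref{cor: Fourier EDMD matrix}.

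The paper's proof is exactly the argument you list as a cross-check. Lemma~\ref{lem:feature-propagation} shows that $p=\widetilde f\,^\top K\Psi\in\mathbb V_n$ agrees with $f\circ F$ on $X$, and uniqueness of the trigonometric interpolant then gives $p=S_n\mathcal K f$.

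Both routes rest on the same fact, namely invertibility of $L_X$, and they are equally short, so the difference is mostly one of presentation. Your version is constructive. It displays the chain $\widetilde f\,^\top\mapsto\widetilde f\,^\top L_{X^+}\mapsto\frac1N\,\widetilde f\,^\top L_{X^+}L_X^\ast$, which is the factorization behind the NFFT/FFT evaluation of $\widehat{\mathcal K}_2$. The paper's version uses only $KL_X=L_{X^+}$ and the defining interpolation property of $S_n$, never the closed form of the inverse. That makes the identification of EDMD with "interpolation of the Koopman image" visible at the level of the characterizing property rather than by matrix algebra.
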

		
		\begin{proof}
			Let $f\in\mathbb{V}_n$ with $f( x)=\widetilde f \,^\top\Psi( x)$.
			By Lemma~\ref{lem:feature-propagation},
			\begin{equation*}
				\widetilde f\,^\top  K\Psi( x_{ j}) = \widetilde f\,^\top\Psi(F( x_{ j})) = f(F( x_{ j}))
			\end{equation*}
			for every sampling point $ x_{ j}\in X$.
			Hence the trigonometric polynomial
			\begin{equation*}
				p( x):=\widetilde f\,^\top K\Psi( x)
			\end{equation*}
			interpolates the function $f\circ F$ on the grid $ X$.
			Since $p\in\mathbb V_n$ and as the trigonometric interpolant is unique,
			\begin{equation*}
				p = S_n(f\circ F) = S_n\mathcal Kf,
			\end{equation*}
			which proves the claim.
		\end{proof}
		
		Proposition~\ref{prop:edmd-interpolation} shows that EDMD with the particular choice of a Fourier dictionary and an equispaced grid is not merely a least-squares approximation procedure. On the dictionary space $\mathbb V_n$, it coincides with trigonometric interpolation of the Koopman image of the dictionary.
		This interpretation naturally extends to arbitrary continuous observables and leads to two Koopman approximants, depending on the available data. These approximants were already introduced and discussed in the context of kernel EDMD in \cite{KohnPhil24}, see also \cite[Remark 2.3]{bold2025kernel}.
        
		Firstly, if the values $f(F( x_{ j}))$ are available we may consider the Koopman approximant
		\begin{equation}\label{eq: one-sided interpolation}
			\widehat{\mathcal{K}}_1 \coloneqq S_n\mathcal{K}: C(\mathds T^d) \to \mathbb{V}_n
		\end{equation}
		which maps an observable $f\in C(\mathds T^d)$ to the trigonometric interpolant of $f\circ F$, i.e.,
        \begin{align*}
            \widehat{\mathcal{K}}_1 f = S_n (f\circ F) = \frac{1}{N} (f\circ F)_X^\top L_X^* \Psi.
        \end{align*}    
        This approximant may be implemented using the fast Fourier transform.
    
		If only the values on the \emph{original} sampling grid $X$ are available, one must first replace $f$ by its trigonometric interpolant. This leads to the data-driven approximation
		\begin{equation}\label{eq: two-sided interpolation}
			\widehat{\mathcal{K}}_2 \coloneqq S_n\mathcal{K}S_n: C(\mathds T^d) \to \mathbb{V}_n.
		\end{equation}
		Then, the Koopman approximant \eqref{eq: two-sided interpolation} applied to $f$ is
		\begin{equation*}
			\widehat{\mathcal{K}}_2f 
			= S_n\mathcal{K}S_nf 
			= S_n\mathcal{K}|_{\mathbb{V}_n}(S_nf) 
			= \widetilde{(S_nf)}^\top  K \Psi
			= \widetilde{f}^\top  K \Psi
			% = \frac{1}{(2n+1)^d}\tilde{f}\,^\top  L_{ X^+} L_{ X}^\ast \Psi 
			= \frac{1}{N^2}f_{ X}^\top  L_{ X}^\ast L_{ X^+} L_{ X}^\ast \Psi.
		\end{equation*}
		Note that the application of this operator only requires the values of $f$ on the grid $ X$, together with the evaluation of the dictionary functions at the image points $F( x_{ j})$, but does not require direct evaluation of $f$ at off-grid locations.
		While the multiplication with $ L_{ X}^\ast$ can be carried out via FFT, the multiplication with $ L_{ X^+}$ involves evaluations of a trigonometric polynomial at the nonuniform points $F( x_{ j})$ and therefore requires an extension of the FFT to nonequispaced points (NFFT).
		
		\section{Error analysis in Sobolev spaces}\label{sec: error analysis}

        The equivalence of Fourier-EDMD and trigonometric interpolation of Koopman images established in the previous section implies that the EDMD approximation error in Sobolev spaces is governed by two ingredients. First, the approximation properties of trigonometric interpolation, and, second, the invariance of Sobolev spaces under the Koopman operator together with a bound on its operator norm. In this section, we quantify both and combine them into an error bound with explicit constants.

        \subsection{Sobolev spaces on the torus} 
        
        We first introduce Sobolev spaces on the torus used throughout this work. 
		For $f\in L^2(\mathds T^d)$ let $\hat f_{ k} = \langle f,\psi_k \rangle_{L^2(\T^d)} = \int_{\mathds T^d} f( x)\,\exp(-2\pi i\langle k, x\rangle)\,\mathrm d x$, $ k\in\Z^d$, denote its Fourier coefficients, i.e. $f = \sum_{k\in\Z^d} \hat{f}_k\psi_k$.  
        For $s\ge 0$ we set
		\begin{equation}\label{eq: Sobolev Fourier norm}
			\|f\|_{H^s(\mathds{T}^d)}^2
			\; \coloneqq\; \sum_{ k\in\mathbb Z^d} (1+\| k\|_2^2)^s\,|\hat f_{ k}|^2
		\end{equation}
		and define the \emph{fractional} Sobolev space 
		\begin{equation}\label{eq:Hs}
			H^s(\mathds{T}^d)
			\coloneqq
			\left\{f\in L^2(\mathds{T}^d) \; : \; \|f\|_{H^s(\mathds{T}^d)}<\infty\right\}.
		\end{equation}
		In particular, $H^0(\mathds T^d)=L^2(\mathds T^d)$ isometrically.
		
		It is well-known that for integer smoothness \(\sigma\in\mathbb N\), this norm is equivalent to the derivative-based Sobolev norm
		\begin{equation*}
			\|f\|_{H^\sigma_{D}(\mathds T^d)}^2
			= \sum_{\|\alpha\|_1\le \sigma} \|\partial^{\alpha}f\|_{L^2(\mathds T^d)}^2, \quad \partial^{\alpha}f
			= \frac{\partial^{\|\alpha\|_1}f} {\partial x_1^{\alpha_1}\cdots\partial x_d^{\alpha_d}}.
		\end{equation*}
		This norm equivalence, with explicit dimension-independent constants, is established for the torus $[0,2\pi]^d$ in \cite[eq.~(2.3)]{ullrich2014approximation} with a slightly different upper bound. We restate it in the scaling of the torus used in this work to explicitly control constants. 
			\begin{lemma}\label{lem:sobolev-norm-equiv}
				Let $\sigma\in\mathbb N$. Then for all $f\in H^\sigma(\T^d)$
				\begin{equation*}
					\frac{1}{\sqrt{\sigma!}}\,\|f\|_{H^\sigma(\T^d)} \le \|f\|_{H^\sigma_D(\T^d)} \le (2\pi)^\sigma\,\|f\|_{H^\sigma(\T^d)} .
				\end{equation*}
			\end{lemma}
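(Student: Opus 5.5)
By Parseval, $\partial^\alpha f$ has Fourier coefficients $(2\pi i)^{\|\alpha\|_1}k^\alpha \hat f_k$, where $k^\alpha=\prod_j k_j^{\alpha_j}$. Hence
\begin{equation*}
\|f\|_{H^\sigma_D(\T^d)}^2=\sum_{k\in\Z^d}|\hat f_k|^2\, w(k),\qquad w(k):=\sum_{\|\alpha\|_1\le\sigma}(2\pi)^{2\|\alpha\|_1}k^{2\alpha}.
\end{equation*}
For $f\in H^\sigma(\T^d)$ the weak derivatives exist in $L^2$ exactly when these series converge, so this identity holds. It therefore suffices to prove the pointwise weight comparison
\begin{equation*}
\frac{1}{\sigma!}(1+\|k\|_2^2)^\sigma\le w(k)\le (2\pi)^{2\sigma}(1+\|k\|_2^2)^\sigma\qquad\text{for all }k\in\Z^d,
\end{equation*}
and then sum against $|\hat f_k|^2$.

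The key tool is the multinomial expansion. Write $a_0=1$ and $a_j=k_j^2$, so that
\begin{equation*}
(1+\|k\|_2^2)^\sigma=\sum_{\beta_0+\|\alpha\|_1=\sigma}\frac{\sigma!}{\beta_0!\,\alpha!}\,k^{2\alpha}=\sum_{\|\alpha\|_1\le\sigma}\frac{\sigma!}{(\sigma-\|\alpha\|_1)!\,\alpha!}\,k^{2\alpha}.
\end{equation*}
Each monomial $k^{2\alpha}$ with $\|\alpha\|_1\le\sigma$ appears exactly once, with multinomial coefficient $c_\alpha=\frac{\sigma!}{(\sigma-\|\alpha\|_1)!\,\alpha!}$. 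These coefficients satisfy $1\le c_\alpha\le\sigma!$.

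For the upper bound, use $(2\pi)^{2\|\alpha\|_1}\le(2\pi)^{2\sigma}$ and $1\le c_\alpha$. Then $w(k)\le(2\pi)^{2\sigma}\sum_\alpha c_\alpha k^{2\alpha}=(2\pi)^{2\sigma}(1+\|k\|_2^2)^\sigma$.

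For the lower bound, use $(2\pi)^{2\|\alpha\|_1}\ge1$ and $c_\alpha\le\sigma!$. Then $(1+\|k\|_2^2)^\sigma\le\sigma!\sum_\alpha k^{2\alpha}\le\sigma!\,w(k)$.

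Taking square roots gives the two constants $1/\sqrt{\sigma!}$ and $(2\pi)^\sigma$. The only point needing care is the bookkeeping of the multinomial coefficients: each $\alpha$ must correspond to a unique term, which holds because $\beta_0=\sigma-\|\alpha\|_1$ is determined by $\alpha$. The justification of the Fourier representation of weak derivatives on the torus is standard.
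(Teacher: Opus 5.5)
Your proof is correct and follows essentially the paper's argument: Parseval reduces the claim to a pointwise weight comparison, and the multinomial expansion with coefficients $1\le c_\alpha\le\sigma!$ gives both bounds. The only cosmetic difference is that the paper first compares $w(k)$ with $(1+4\pi^2\|k\|_2^2)^\sigma$ and then with $(1+\|k\|_2^2)^\sigma$, whereas you absorb the factors $(2\pi)^{2\|\alpha\|_1}$ term by term.
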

			\begin{proof}
				We follow the proof of \cite[eq.~(2.3)]{ullrich2014approximation}. By the multinomial identity, we have
				\begin{equation}\label{eq: multinomial identity}
					\big(1+\| k\|_2^2\big)^\sigma = \sum_{\|\alpha\|_1\le\sigma} \underbrace{\frac{\sigma!}{(\sigma-\|\alpha\|_1)!\,\alpha_1!\cdots\alpha_d!}}_{\eqqcolon c_\alpha} \ \prod_{i=1}^d k_i^{2\alpha_i} ,
				\end{equation}
				with $1 \le c_\alpha \le \sigma!$. Since $\partial^{\alpha}\psi_{ k} = \prod_{i=1}^d (2\pi\mathrm i k_i)^{\alpha_i}\psi_{ k}$, Parseval's identity gives 
                \begin{equation*}
                    \|f\|_{H^\sigma_D(\T^d)}^2 = \sum_{ k\in\Z^d} w( k)\,|\hat f_{ k}|^2 \quad \text{with} \quad w( k) \coloneqq \sum_{\|\alpha\|_1\le\sigma}\prod_{i=1}^d (2\pi k_i)^{2\alpha_i}.
                \end{equation*}
                That is, $w(k)$ is the right-hand side of \eqref{eq: multinomial identity} with $2\pi k$ in place of $k$ and all coefficients $c_\alpha$ replaced by one.
                Since all summands are nonnegative, comparing the two sums termwise gives
                \begin{equation*}
                    w(k) \le (1+4\pi^2\| k\|_2^2)^\sigma \le \sigma!\,w( k).
                \end{equation*}
                Together with $1+\| k\|_2^2 \le 1+4\pi^2\| k\|_2^2 \le 4\pi^2 (1+\| k\|_2^2)$, this yields
                \begin{equation*}
                    \frac{1}{\sigma!} (1+\| k\|_2^2)^\sigma \le w(k) \le (2\pi)^{2\sigma}(1+\| k\|_2^2)^\sigma.
                \end{equation*}
                and taking square roots proves the claim.
			\end{proof}

        \subsection{Boundedness of the Koopman operator}
        
		The following theorem uses the norm equivalence established above to transfer known bounds for the Koopman operator in the derivative-based Sobolev norm to the Fourier-based Sobolev norm \eqref{eq: Sobolev Fourier norm}.
        To this end, for a multi-index $\gamma\in\N_0^d$ the set $\mathcal P(\gamma)$ contains all tuples $\beta=(\beta_1,\ldots,\beta_{\|\gamma\|_1})\in\{1,\ldots,d\}^{\|\gamma\|_1}$ in which every index $i$ occurs exactly $\gamma_i$ times, so that $\partial^\gamma=\partial_{\beta_1}\cdots\partial_{\beta_{\|\gamma\|_1}}$ for each such $\beta$.
        Further, $\Pi_m$ denotes the set of partitions $\pi$ of $\{1,\ldots,m\}$ into nonempty blocks and $|\pi|$ the number of blocks of a partition $\pi\in\Pi_m$. 
    
		\begin{theorem}\label{thm:koopman-well-defined and bound}
			Let $\sigma \in \mathbb{N}_0$ and assume that the dynamics map satisfies $F \in C^\sigma(\mathds{T}^d)$ and is a $C^1$-diffeomorphism with 
           \begin{equation*}
               c_0^{-1}\coloneqq\inf_{ x\in\mathds T^d} |\det DF( x)| > 0.
           \end{equation*} 
            Then, the Koopman operator $\mathcal{K} : H^\sigma(\mathds{T}^d) \to H^\sigma(\mathds{T}^d)$ is well defined and bounded with
			\[
			\| \mathcal{K}\|_{H^{\sigma}(\mathds{T}^d) \to H^{\sigma}(\mathds{T}^d)}  \le (2\pi)^\sigma\sqrt{\sigma!}\ C_F, \qquad C_F \coloneqq
			\bigg(\max\bigg\{c_0,\sum_{1\le\|\alpha\|_1\le\sigma}S_{\alpha}(F)\bigg\}\bigg)^{1/2}
			\]
			and
			\[
			S_\alpha(F) := \sup_{x\in\mathds{T}^d}|\det DF(x)|^{-1}\sum_{k=1}^{\|\alpha\|_1}\sum_{\|\alpha'\|_1=k}\Bigg|\sum_{\substack{\pi\in\Pi_{\|\alpha\|_1}\\|\pi|=k}}\sum_{\beta'\in\mathcal{P}(\alpha')}\partial_{\beta,B_1}F_{\beta'_1}(x)\cdots\partial_{\beta,B_k}F_{\beta'_k}(x)\Bigg|^2
			\]
            for $\alpha\in\mathbb{N}_0^d$, $1\le\|\alpha\|_1\le\sigma$, and any $\beta\in\mathcal{P}(\alpha)$ where $B_i$ denotes the $i$-th block of the partition $\pi$, and $\partial_{\beta,B}$ stands for the operator $\partial_{\beta_{j_1}}\cdots\partial_{\beta_{j_\ell}}$, where $B = \{j_1,\ldots,j_\ell\}$.
		\end{theorem}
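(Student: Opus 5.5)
The plan is to sandwich the Fourier-based norm between derivative-based norms using Lemma~\ref{lem:sobolev-norm-equiv}. Concretely, we aim to prove
\[
\|\mathcal K f\|_{H^\sigma_D(\T^d)} \le C_F \|f\|_{H^\sigma_D(\T^d)}
\]
for $f\in C^\infty(\T^d)$, then extend by density. The stated constant then follows from the chain
\[
\|\mathcal K f\|_{H^\sigma} \le \sqrt{\sigma!}\,\|\mathcal K f\|_{H^\sigma_D} \le \sqrt{\sigma!}\,C_F\|f\|_{H^\sigma_D} \le (2\pi)^\sigma\sqrt{\sigma!}\,C_F\|f\|_{H^\sigma}.
\]
The density step works as follows. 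Trigonometric polynomials are dense in $H^\sigma(\T^d)$, and composition with $F$ is continuous from $L^2$ to $L^2$ (shown below). Hence a uniform $H^\sigma$ bound on $\mathcal K$ over smooth $f$ extends $\mathcal K$ to a bounded operator on $H^\sigma$. This extension agrees with $f\mapsto f\circ F$, because the $L^2$ limits coincide.

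\emph{Zeroth-order term.} Substitute $y=F(x)$, which is legitimate since $F$ is a $C^1$-diffeomorphism of $\T^d$. This gives
\[
\|f\circ F\|_{L^2}^2=\int |f(y)|^2|\det DF(F^{-1}(y))|^{-1}\,\mathrm dy\le c_0\|f\|_{L^2}^2.
\]

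\emph{Derivative terms.} For $1\le\|\alpha\|_1\le\sigma$, fix $\beta\in\mathcal P(\alpha)$ and expand $\partial^\alpha(f\circ F)=\partial_{\beta_1}\cdots\partial_{\beta_m}(f\circ F)$, with $m=\|\alpha\|_1$, using the multivariate Fa\`a di Bruno formula:
\[
\partial^\alpha(f\circ F)(x)=\sum_{\pi\in\Pi_m}\sum_{\beta'\in\{1,\dots,d\}^{|\pi|}}(\partial_{\beta'_1}\cdots\partial_{\beta'_{|\pi|}}f)(F(x))\prod_{i=1}^{|\pi|}\partial_{\beta,B_i}F_{\beta'_i}(x).
\]
Group the tuples $\beta'$ according to the multi-index $\alpha'$ they represent, i.e.\ $\beta'\in\mathcal P(\alpha')$ with $\|\alpha'\|_1=k=|\pi|$. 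Since mixed partials commute for smooth $f$, the derivative of $f$ within each group is $\partial^{\alpha'}f$. This yields
\[
\partial^\alpha(f\circ F)(x)=\sum_{k=1}^m\sum_{\|\alpha'\|_1=k}(\partial^{\alpha'}f)(F(x))\,G_{\alpha,\alpha'}(x),
\]
where $G_{\alpha,\alpha'}$ is exactly the inner double sum appearing in $S_\alpha(F)$.

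Now apply Cauchy--Schwarz over the index set $\{\alpha'\}$:
\[
|\partial^\alpha(f\circ F)(x)|^2\le\Big(\sum_{\alpha'}|G_{\alpha,\alpha'}(x)|^2\Big)\Big(\sum_{1\le\|\alpha'\|_1\le m}|\partial^{\alpha'}f(F(x))|^2\Big).
\]
Integrate and change variables as before. Bounding $|\det DF|^{-1}\sum_{\alpha'}|G_{\alpha,\alpha'}|^2$ by $S_\alpha(F)$ gives
\[
\|\partial^\alpha(f\circ F)\|_{L^2}^2\le S_\alpha(F)\sum_{1\le\|\alpha'\|_1\le\sigma}\|\partial^{\alpha'}f\|_{L^2}^2.
\]
Getting this to match requires one care point. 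The weight $|\det DF(F^{-1}(y))|^{-1}$ arises after the substitution. So we should bound $\sup_x |\det DF(x)|^{-1}\sum_{\alpha'}|G_{\alpha,\alpha'}(x)|^2$ pointwise before substituting, and then write
\[
\int g(F(x))h(x)\,\mathrm dx\le \sup_x\big(h(x)|\det DF(x)|^{-1}\big)\int g(F(x))|\det DF(x)|\,\mathrm dx,
\]
where the last integral equals $\int g$. This is precisely why $S_\alpha$ carries the factor $|\det DF|^{-1}$ inside the supremum.

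\emph{Summation.} Summing over $\alpha$ and adding the zeroth-order term gives
\[
\|\mathcal K f\|_{H^\sigma_D}^2\le c_0\|f\|_{L^2}^2+\Big(\sum_\alpha S_\alpha(F)\Big)\sum_{\|\alpha'\|_1\ge1}\|\partial^{\alpha'} f\|_{L^2}^2\le C_F^2\|f\|_{H^\sigma_D}^2.
\]

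The main obstacle is bookkeeping in Fa\`a di Bruno. We must verify that the regrouping of $\beta'$ by $\alpha'$ is exact, and that the expression is independent of the chosen $\beta\in\mathcal P(\alpha)$, which holds because $F\in C^\sigma$ gives commuting partials. We also need $f\circ F\in C^\sigma$ for smooth $f$, which holds since $F\in C^\sigma$. Finally, for $\sigma=0$ the derivative sum is empty and only the $c_0$ term remains.
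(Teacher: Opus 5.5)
Your proof is correct and follows the same route as the paper, which also obtains the Fourier-norm bound by sandwiching with Lemma~\ref{lem:sobolev-norm-equiv}, exactly as in your displayed chain. The only difference is that the paper imports the derivative-norm estimate $\|\mathcal K f\|_{H^\sigma_D(\T^d)}\le C_F\|f\|_{H^\sigma_D(\T^d)}$ directly from \cite[Theorem 4.2]{KohnPhil24}, whereas you re-derive it (Fa\`a di Bruno, Cauchy--Schwarz with the weight $|\det DF|^{-1}$ kept inside the supremum, change of variables, and density), which makes your argument self-contained and correctly reproduces the constant $C_F$.
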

		
		\begin{proof}
			In \cite[Theorem 4.2]{KohnPhil24}, well-definedness as well as boundedness are shown for the Koopman operator in the derivative-based norm, that is, the bound $\|\calK f\|_{H^\sigma_{D}(\mathds{T}^d)} \leq C_F \|f\|_{H^\sigma_{D}(\mathds{T}^d)}$.
            The norm equivalence of Lemma \ref{lem:sobolev-norm-equiv} implies for the Fourier-based norm
			\begin{equation*}
				\|\calK f\|_{H^\sigma(\mathds{T}^d)}
				\le \sqrt{\sigma!} \ \|\calK f\|_{H^\sigma_{D}(\mathds{T}^d)}
				\le \sqrt{\sigma!} \ C_F \|f\|_{H^\sigma_{D}(\mathds{T}^d)}
				\le (2\pi)^{\sigma}\sqrt{\sigma!} \ C_F \|f\|_{H^\sigma(\mathds{T}^d)}.
                \qedhere
			\end{equation*}
		\end{proof}
		The previous result establishes boundedness of the Koopman operator in $H^\sigma(\T^d)$ for integer smoothness $\sigma$. For $\sigma=0$ the sum in the definition of the constant $C_F$ is empty and the bound reads $\|\calK\|_{L^2(\T^d)\to L^2(\T^d)}\le\sqrt{c_0}$, which follows from the change of variables $ y = F( x)$.
	
        As in \cite[Theorem 4.2]{KohnPhil24} we use interpolation theory of Hilbert spaces (see e.g.\ \cite{chandler2015interpolation}), to extend these estimates to fractional-order Sobolev spaces.
		On the torus, the Fourier characterization of Sobolev spaces allows for a particularly simple description as fractional Sobolev spaces coincide \emph{isometrically} with the interpolation spaces between $L^2(\T^d)$ and $H^\sigma(\T^d)$. In the following result, $(\cdot,\cdot)_{\theta,2}$ denotes the real interpolation space obtained by the $K$-method with the normalized norm of \cite[Section 2]{chandler2015interpolation}.
		
		\begin{lemma}\label{lem: interpolation equal norm}
			Let $\sigma\in\mathbb{N}$ and $s\in(0,\sigma)$. Then, for $\theta = s/ \sigma \in(0,1)$ we have
			\begin{equation}\label{eq: H^s interpolation space}
				H^s(\mathds{T}^d) = (L^2(\mathds{T}^d), H^\sigma(\mathds{T}^d))_{\theta,2} %\ , \quad \theta = \frac{s}{\sigma} \in (0,1)
			\end{equation}
			with equality of norms.
		\end{lemma}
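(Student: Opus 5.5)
Both $L^2(\T^d)$ and $H^\sigma(\T^d)$ are diagonalized by the Fourier coefficient map $f\mapsto(\hat f_k)_{k\in\Z^d}$. This map is a unitary isomorphism onto weighted sequence spaces $\ell^2_{w_0}$ and $\ell^2_{w_1}$ with weights $w_0(k)=1$ and $w_1(k)=(1+\|k\|_2^2)^\sigma$. I would therefore reduce the statement to the interpolation of weighted $\ell^2$ spaces (equivalently, of weighted $L^2$ spaces on $\Z^d$ with counting measure). For these, \cite{chandler2015interpolation} gives an exact result. With the normalized $K$-method norm (the normalization constant $N_{\theta,2}$ is chosen exactly so that this holds), one has $(\ell^2_{w_0},\ell^2_{w_1})_{\theta,2}=\ell^2_{w_0^{1-\theta}w_1^{\theta}}$ with equality of norms. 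Since $w_0^{1-\theta}w_1^\theta=(1+\|k\|_2^2)^{\sigma\theta}=(1+\|k\|_2^2)^s$ for $\theta=s/\sigma$, the right-hand side is exactly $\|\cdot\|_{H^s(\T^d)}^2$ from \eqref{eq: Sobolev Fourier norm}. Transporting back through the unitary map yields \eqref{eq: H^s interpolation space} isometrically. This uses that the interpolation functor commutes with isometric isomorphisms of compatible couples, which is immediate from the definition of the $K$-functional.

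If I wanted to verify the weighted identity directly rather than cite it, I would compute the $K$-functional explicitly. For $a=(a_k)$,
\[
K(t,a)^2=\inf_{a=b+c}\sum_k\big(|b_k|^2+t^2w_1(k)|c_k|^2\big)=\sum_k \frac{t^2w_1(k)}{1+t^2w_1(k)}|a_k|^2 ,
\]
since the infimum decouples coordinatewise and each scalar problem is minimized at $c_k=a_k/(1+t^2w_1(k))$. Strictly, one should use the quadratic version of the $K$-functional, $K_2(t,a)=\inf(\|b\|^2+t^2\|c\|^2)^{1/2}$, which is the convention in \cite{chandler2015interpolation}.

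Inserting this into $\|a\|_{\theta,2}^2=N_{\theta,2}^2\int_0^\infty t^{-2\theta}K(t,a)^2\,\frac{dt}{t}$ and using Tonelli gives
\[
\|a\|_{\theta,2}^2=N_{\theta,2}^2\sum_k |a_k|^2\int_0^\infty \frac{t^{1-2\theta}w_1(k)}{1+t^2w_1(k)}\,dt .
\]
The substitution $u=t\sqrt{w_1(k)}$ turns the inner integral into $w_1(k)^{\theta}\int_0^\infty u^{1-2\theta}/(1+u^2)\,du$, and $N_{\theta,2}$ is precisely the inverse square root of that constant. Hence $\|a\|_{\theta,2}^2=\sum_k w_1(k)^\theta|a_k|^2$.

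The main obstacle is bookkeeping rather than analysis. One must match the normalization convention of \cite[Section 2]{chandler2015interpolation}, including the quadratic $K$-functional and the constant $N_{\theta,2}=\big(2\sin(\pi\theta)/\pi\big)^{1/2}$, so that equality holds and not merely equivalence. One must also check that the set identity holds, i.e. that $a\in(\ell^2_{w_0},\ell^2_{w_1})_{\theta,2}$ if and only if $\sum_k w_1(k)^\theta|a_k|^2<\infty$. This follows from the same computation, since both sides are finite simultaneously.
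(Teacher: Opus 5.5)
Your proposal is correct and follows essentially the same route as the paper. The paper also uses Parseval to identify $L^2(\T^d)$ and $H^\sigma(\T^d)$ unitarily with weighted $L^2$ spaces on $\Z^d$ (counting measure, weights $1$ and $(1+\|k\|_2^2)^\sigma$), and then cites \cite[Corollary 3.2]{chandler2015interpolation}; your explicit quadratic $K$-functional computation with $N_{\theta,2}=(2\sin(\pi\theta)/\pi)^{1/2}$ is a correct self-contained verification of that cited weighted identity.
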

		\begin{proof}
			Consider the measure space $(\Z^d, \mathcal P(\Z^d), \mu)$ with counting measure $\mu$. 
			Denote by the mapping $\mathcal F$, $f\mapsto(\hat f_k)_{k\in\Z^d}$ the Fourier transform and set $w_0(k) \equiv 1$ and $w_1(k)=(1+\|k\|_2^2)^\sigma$.
			By Parseval's identity
			\begin{equation*}
				\|f\|_{L^2(\T^d)}^2 = \sum_{k\in\Z^d} |\widehat{f}_k|^2 = \|\mathcal F f\|^2_{L^2(\Z^d, w_0\mu)}
			\end{equation*}
			and similarly, by the definition \eqref{eq: Sobolev Fourier norm} of the Sobolev norm,
			\begin{equation*}
				\|f\|_{H^\sigma(\T^d)}^2 = \sum_{k\in\Z^d} (1+\|k\|_2^2)^\sigma |\widehat{f}_k|^2 = \|\mathcal F f\|^2_{L^2(\Z^d, w_1\mu)}.
			\end{equation*}
			In particular, the mappings $\mathcal{F}:L^2(\T^d) \to L^2(\Z^d, w_0\mu)$ as well as $\mathcal{F}: H^\sigma(\T^d) \to L^2(\Z^d, w_1\mu)$ are unitary isomorphisms. 
			Hence, the result follows from \cite[Corollary 3.2]{chandler2015interpolation}. 
		\end{proof}
		
		\begin{remark}
			For general domains $\Omega\subset\R^d$, the identification of Sobolev spaces with interpolation spaces typically holds only up to equivalence of norms. 
			More precisely, under suitable regularity assumptions on $\Omega$, e.g. bounded Lipschitz domains, one has
			\begin{equation*}
				H^s(\Omega) = (L^2(\Omega), H^\sigma(\Omega))_{s/\sigma,2}
			\end{equation*}
			with equivalent norms. However, the equivalence constants depend on the domains and associated extension operators and are, in general, unknown. 
		\end{remark}
		
		The isometric identification of fractional Sobolev spaces on the torus with interpolation spaces enables us to transfer the operator bounds in Theorem \ref{thm:koopman-well-defined and bound} to arbitrary smoothness without introducing additional interpolation constants. 
		
		\begin{proposition}\label{prop: koopman bound non-integer s}
			Let $\sigma\in\mathbb{N}$ and assume that $F\in C^\sigma(\mathds{T}^d)$ is a $C^1$-diffeomorphism satisfying $c_0^{-1}=\inf_{ x\in\mathds T^d} |\det DF( x)| > 0$. Then for all $s\in(0,\sigma]$ the linear Koopman operator
			\begin{equation*}
				\mathcal{K} : H^s(\mathds{T}^d) \to H^s(\mathds{T}^d)
			\end{equation*}
			is well-defined and bounded. In particular, we have
			\begin{equation*}
				\|\mathcal{K}\|_{H^s(\mathds{T}^d)\to H^s(\mathds{T}^d)} \leq % \ %A(s) 
                 (2\pi)^s \ c_0^{\frac{\sigma-s}{2\sigma}} \ (\sigma!)^{\frac{s}{2\sigma}} \ C_F^{\,s/\sigma} 
			\end{equation*}
            with $C_F$ as defined in Theorem \ref{thm:koopman-well-defined and bound}.
		\end{proposition}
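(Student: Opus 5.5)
Since $\mathcal K$ is linear, the plan is to interpolate between the two integer endpoint bounds using Lemma~\ref{lem: interpolation equal norm}. The endpoint bounds come from Theorem~\ref{thm:koopman-well-defined and bound}. For $\sigma=0$ we have $\|\mathcal K\|_{L^2\to L^2}\le M_0:=\sqrt{c_0}$; this follows from the change of variables $y=F(x)$, using that $F$ is a $C^1$-diffeomorphism with $|\det DF|\ge c_0^{-1}$. At smoothness $\sigma$ we have $\|\mathcal K\|_{H^\sigma\to H^\sigma}\le M_1:=(2\pi)^\sigma\sqrt{\sigma!}\,C_F$. The case $s=\sigma$ is therefore exactly the theorem, and the claimed bound reduces to it. For $s\in(0,\sigma)$ I set $\theta=s/\sigma$.

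The main step is the exact interpolation property of the $K$-method with exponent $\theta$ for the normalized norm of \cite{chandler2015interpolation}. If a linear $T$ maps $X_0\to Y_0$ with norm $M_0$ and $X_1\to Y_1$ with norm $M_1$ (compatibly on $X_0+X_1$), then $T:(X_0,X_1)_{\theta,2}\to(Y_0,Y_1)_{\theta,2}$ with norm at most $M_0^{1-\theta}M_1^\theta$. This follows directly from the definition. For $f=f_0+f_1$ we have $K(t,Tf)\le \|Tf_0\|+t\|Tf_1\|\le M_0\,K(tM_1/M_0,f)$. Substituting $\tau=tM_1/M_0$ in the weighted integral $\int_0^\infty t^{-2\theta}K(t,\cdot)^2\,dt/t$ produces the factor $(M_1/M_0)^{2\theta}$, and the normalization constant is unaffected. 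I would apply this with $X_0=Y_0=L^2(\T^d)$ and $X_1=Y_1=H^\sigma(\T^d)$. Well-definedness of $\mathcal K$ on $L^2+H^\sigma=L^2$ is already given by the $L^2$ bound.

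By Lemma~\ref{lem: interpolation equal norm}, both interpolation spaces coincide with $H^s(\T^d)$ with equality of norms, so no extra constants enter. This gives
\[
\|\mathcal K\|_{H^s\to H^s}\le c_0^{\frac{1-\theta}{2}}\big((2\pi)^\sigma\sqrt{\sigma!}\,C_F\big)^{\theta}
=(2\pi)^s\,c_0^{\frac{\sigma-s}{2\sigma}}\,(\sigma!)^{\frac{s}{2\sigma}}\,C_F^{s/\sigma},
\]
which is exactly the stated bound.

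The main obstacle I expect is the justification of the $L^2$ endpoint. The theorem is stated for $\sigma\in\mathbb N_0$, but the $L^2$ bound really only needs the change of variables. One has to check carefully that $\|f\circ F\|_{L^2}^2=\int |f(y)|^2|\det DF(F^{-1}y)|^{-1}dy\le c_0\|f\|_{L^2}^2$. Beyond that, the only care needed is to confirm that the exactness of the interpolation estimate holds for the normalized $K$-norm used in Lemma~\ref{lem: interpolation equal norm}.
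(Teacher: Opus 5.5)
Your proposal is correct and follows the paper's proof: you interpolate the $L^2$ bound $\sqrt{c_0}$ and the $H^\sigma$ bound from Theorem~\ref{thm:koopman-well-defined and bound} using the isometric identification of Lemma~\ref{lem: interpolation equal norm}, which gives $\|\mathcal K\|_{L^2\to L^2}^{1-s/\sigma}\|\mathcal K\|_{H^\sigma\to H^\sigma}^{s/\sigma}$. The only difference is that you prove the exact interpolation inequality yourself by rescaling $\tau=tM_1/M_0$ in the $K$-functional, whereas the paper cites \cite[Lemma 22.3]{tartar2007introduction}; your rescaling argument also works verbatim for the quadratic $K$-functional of \cite{chandler2015interpolation}.
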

		\begin{proof}
			Let $\sigma\in\mathbb{N}$ and $s\in(0,\sigma)$.
			By Lemma \ref{lem: interpolation equal norm}, $H^s(\mathds{T}^d) = (L^2(\mathds{T}^d), H^\sigma(\mathds{T}^d))_{s/\sigma,2}$, with equality of norms.
			Since, by Theorem \ref{thm:koopman-well-defined and bound}, the Koopman operator $\mathcal{K}$ is well-defined and bounded on $H^\sigma(\mathds{T}^d)$ and on $L^2(\mathds{T}^d)$, \cite[Lemma 22.3]{tartar2007introduction} implies that also $\mathcal{K}: H^s(\mathds{T}^d) \to H^s(\mathds{T}^d)$ is well-defined and bounded with
			\begin{equation*}
				\|\mathcal{K}\|_{H^s(\mathds{T}^d)\to H^s(\mathds{T}^d)} \leq \|\mathcal{K}\|_{L^2(\mathds{T}^d)\to L^2(\mathds{T}^d)}^{1-s/\sigma} \|\mathcal{K}\|_{H^\sigma(\mathds{T}^d)\to H^\sigma(\mathds{T}^d)}^{s/\sigma} \ .
			\end{equation*}
			Substituting the bounds from Theorem \ref{thm:koopman-well-defined and bound} yields the result.
		\end{proof}

        \subsection{The trigonometric interpolation error}
        
		Having established boundedness of the Koopman operator in Sobolev spaces, we now turn to the approximation error induced by the trigonometric interpolation.
		We first prove the following auxiliary lemma.
		
		\begin{lemma}\label{epstein} 
			For $d\in\mathds N$ and $s>d/2$ it holds
			\begin{equation*}
				\sum_{ k\in\mathds Z^d\setminus\{ 0\}} \| k\|_2^{-2s} 
				\le \frac{2^{2d+1}s}{2s-d} \,.
			\end{equation*}
		\end{lemma}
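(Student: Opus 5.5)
The plan is to compare the lattice sum with a one-dimensional series by grouping the frequencies into shells of constant maximum norm. For $m\in\N$ set $A_m\coloneqq\{k\in\Z^d:\|k\|_\infty=m\}$. Then $\Z^d\setminus\{0\}$ is the disjoint union of the shells $A_m$, $m\ge 1$. On $A_m$ we use the elementary bound $\|k\|_2\ge\|k\|_\infty=m$. This gives
\begin{equation*}
\sum_{k\in\Z^d\setminus\{0\}}\|k\|_2^{-2s}\le\sum_{m=1}^\infty |A_m|\,m^{-2s}.
\end{equation*}

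\textbf{Counting the shells.} We have $|A_m|=(2m+1)^d-(2m-1)^d$. By the mean value theorem applied to $t\mapsto t^d$ on $[2m-1,2m+1]$,
\begin{equation*}
|A_m|\le 2d(2m+1)^{d-1}\le 2d\,3^{d-1}m^{d-1},
\end{equation*}
where the last step uses $2m+1\le 3m$ for $m\ge1$. Hence the lattice sum is bounded by $2d\,3^{d-1}\sum_{m\ge1}m^{d-1-2s}$.

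\textbf{The one-dimensional series.} Since $s>d/2$, the exponent satisfies $d-1-2s<-1$, so the function $t\mapsto t^{d-1-2s}$ is decreasing and integrable at infinity. The integral comparison then gives
\begin{equation*}
\sum_{m\ge1}m^{d-1-2s}\le 1+\int_1^\infty t^{d-1-2s}\,\mathrm dt=1+\frac{1}{2s-d}=\frac{2s-d+1}{2s-d}.
\end{equation*}
Because $d\ge1$, the numerator satisfies $2s-d+1\le 2s$. So far we therefore have
\begin{equation*}
\sum_{k\ne0}\|k\|_2^{-2s}\le \frac{4d\,3^{d-1}s}{2s-d}.
\end{equation*}

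\textbf{Matching the stated constant.} It remains to check that $4d\,3^{d-1}\le 2^{2d+1}=8\cdot 4^{d-1}$, which is equivalent to $d(3/4)^{d-1}\le 2$ for all $d\in\N$. The sequence $d(3/4)^{d-1}$ increases while $d<4$ and decreases afterwards, since the ratio of consecutive terms is $\tfrac{3(d+1)}{4d}$. Its maximum is therefore attained at $d=3$ and $d=4$, with value $27/16<2$. This proves the claim.

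The only delicate point is this constant bookkeeping. The cruder estimates $|A_m|\le(2m+1)^d$ or $\|k\|_2\ge \|k\|_\infty$ with worse counting would lose the factor $4^d$ versus $3^d$ slack. So the mean value theorem step, $|A_m|\le 2d(2m+1)^{d-1}$, is the step to get right.
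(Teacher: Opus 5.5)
Your proof is correct and follows the paper's argument: the decomposition into $\ell_\infty$-shells, the bound $\|k\|_2\ge\|k\|_\infty$, the integral comparison $\sum_{m\ge1}m^{d-1-2s}\le 1+\frac{1}{2s-d}$, and the estimate $2s-d+1\le 2s$. The only difference is the shell count: the paper's binomial expansion gives $(2R+1)^d-(2R-1)^d\le 4^dR^{d-1}$, which lands directly on $2^{2d+1}$, whereas your mean-value bound $2d\,3^{d-1}m^{d-1}$ is slightly sharper for large $d$ but needs the extra check $d(3/4)^{d-1}\le 2$, so the mean value theorem step is not really the one delicate point.
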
 
		
		\begin{proof} 
			We order the sum according to nesting $\ell_\infty$-shells
			\begin{align*}
				\sum_{ k\in\mathds Z^d\setminus\{ 0\}} \!\! \| k\|_2^{-2s}
				= \sum_{R=1}^{\infty} \sum_{\| k\|_\infty = R} \| k\|_2^{-2s}
				\le \sum_{R=1}^{\infty} R^{-2s} \!\!\sum_{\| k\|_\infty = R} \!\! 1 \le \sum_{R=1}^{\infty} R^{-2s} ((2R+1)^d-(2R-1)^d) \,.
			\end{align*}
			By the binomial theorem we have
			\begin{align*}
				(2R+1)^d-(2R-1)^d
				&= \sum_{j=0}^{d} \binom d{j} (2R)^j - \sum_{j=0}^{d} \binom d{j} (2R)^j (-1)^{d-j} \\
				&= \sum_{j=0}^{d-1} \binom d{j} (2R)^j (1 - (-1)^{d-j}) \\&\le 2(2R)^{d-1} \sum_{j=0}^{d-1} \binom d{j} \le 2\cdot 2^d (2R)^{d-1}
				= 4^d R^{d-1} \,.
			\end{align*}
            such that combining the above relations we get
            \begin{equation*}
				\sum_{ k\in\mathds Z^d\setminus\{ 0\}} \| k\|_2^{-2s} 
				\le 4^{d} \sum_{R=1}^{\infty} R^{-2s} R^{d-1}
				= 4^{d} \sum_{R=1}^{\infty} R^{-2s+d-1} \,.
			\end{equation*}
			% As the summands are monotone decreasing, we may estimate them by an integral. 
            Set $\beta\coloneqq 2s-d+1$, i.e. that the exponent of the last sum equals $-\beta$, and note that $\beta>1$ by the assumption $s>d/2$.  Since $x\mapsto x^{-\beta}$ is monotone decreasing on $[1,\infty)$, we have $R^{-\beta}\le\int_{R-1}^{R}x^{-\beta}\,\mathrm dx$ for all $R\ge 2$, and the intervals $[R-1,R]$, $R\ge2$, cover $[1,\infty)$. 
            Hence, summing the above and keeping the summand $R=1$ separate gives
			\begin{align*}
				\sum_{ k\in\mathds Z^d\setminus\{ 0\}} \| k\|_2^{-2s} 
				&\le 4^{d} \Big(1+\int_{1}^{\infty} x^{-\beta} \;\mathrm dx\Big)
				= 4^{d} \Big(1+\frac{1}{\beta-1}\Big)
				= 4^{d} \Big(1+\frac{1}{2s-d}\Big) \\
				&= 4^d\,\frac{2s-d+1}{2s-d}
				\le \frac{2^{2d+1}s}{2s-d} \,,
			\end{align*}
            where the last inequality holds since $2s-d+1\le 2s$ for $d\ge 1$.
		\end{proof} 
		
		The above lemma together with the Fourier characterization of Sobolev spaces leads to the following approximation estimate for trigonometric interpolation. While the convergence rate is classical, see e.g. \cite[Theorem~3.6.4]{Temlyakov18}, we require an estimate with fully explicit constants, since it is our aim to derive explicit error bounds for the Fourier-EDMD approximation.
		
		We note that the following error bound on the interpolation is optimal in the sense that no algorithm using $(2n+1)^d$ point evaluations attains a better rate in the worst case over $H^s(\T^d)$, see \cite{Temlyakov18,dung2018hyperbolic}. 
        
		\begin{theorem}\label{thm: bound interpolation error} 
			Let $d,n\in\mathds N$, $s>d/2$, and $0\le r \le s$ be smoothness parameters and $S_n$ the $d$-dimensional trigonometric interpolation from Definition \ref{def: trigonometric interpolation} based on $(2n+1)^d$ point evaluations.
			Then
			\begin{equation*}
				\|\operatorname{Id}-S_n\|_{H^s(\mathds T^d)\to H^r(\mathds T^d)}
				\le 2^{d+1}\sqrt{\frac{s}{2s-d}}(2d)^{r/2}\ (n^d)^{\frac{r-s}{d}}  \,.
			\end{equation*}
			In particular, we have for the $L^2$ worst-case error
			\begin{equation*}
				\sup_{\|f\|_{H^s(\mathds T^d)} \le 1}
				\|f-S_n f\|_{L^2(\mathds T^d)}
				\le 2^{d+1}\sqrt{\frac{s}{2s-d}}\ (n^d)^{-\frac{s}{d}}.
			\end{equation*}
			Further the trigonometric interpolation operator is bounded from $H^s(\mathds T^d)$ into itself, i.e.
			\begin{equation*}
				\|S_n\|_{H^s(\mathds T^d)\to H^s(\mathds T^d)}
				\le 2^{d+2}\sqrt{\frac{s}{2s-d}}(2d)^{s/2} \,.
			\end{equation*}
		\end{theorem}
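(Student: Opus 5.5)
We work entirely on the Fourier side and use aliasing. For $f\in H^s(\T^d)$ with $s>d/2$ the Fourier series converges absolutely: by Cauchy--Schwarz and Lemma~\ref{epstein}, $\sum_k|\hat f_k|<\infty$. Hence $f$ is continuous and $f(x_j)=\sum_k \hat f_k\psi_k(x_j)$. Since $\psi_{k+(2n+1)m}(x_j)=\psi_k(x_j)$ on the grid, uniqueness of the interpolant (Lemma~\ref{lem: interpolation coefficients}, or directly \eqref{eq: character}) gives the aliasing formula
\begin{equation*}
\widehat{(S_nf)}_k=\sum_{m\in\Z^d}\hat f_{k+(2n+1)m}\quad (k\in K_n),\qquad \widehat{(S_nf)}_k=0 \quad (k\notin K_n).
\end{equation*}
Writing $f-S_nf=(f-P_nf)+(P_nf-S_nf)$, with $P_n$ the orthogonal projection onto $\mathbb V_n$, the $H^r$-norm squared splits orthogonally into a tail part $\sum_{k\notin K_n}(1+\|k\|_2^2)^r|\hat f_k|^2$ and an aliasing part $\sum_{k\in K_n}(1+\|k\|_2^2)^r\big|\sum_{m\neq0}\hat f_{k+(2n+1)m}\big|^2$.

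For the tail, each $k\notin K_n$ has $\|k\|_2\ge\|k\|_\infty\ge n+1>n$, so $(1+\|k\|_2^2)^{r-s}\le n^{2(r-s)}$ since $r\le s$. This bounds the tail part by $n^{2(r-s)}\|f\|_{H^s}^2$.

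For the aliasing part, fix $k\in K_n$ and apply Cauchy--Schwarz with weights $(1+\|k+(2n+1)m\|_2^2)^{\pm s}$. This gives
\begin{equation*}
\Big|\sum_{m\neq0}\hat f_{k+(2n+1)m}\Big|^2\le \Big(\sum_{m\neq0}(1+\|k+(2n+1)m\|_2^2)^{s}|\hat f_{k+(2n+1)m}|^2\Big)\Big(\sum_{m\neq0}\|k+(2n+1)m\|_2^{-2s}\Big).
\end{equation*}
Since $\|k\|_\infty\le n$, each coordinate satisfies $|k_i+(2n+1)m_i|\ge (2n+1)|m_i|-n\ge n|m_i|$, hence $\|k+(2n+1)m\|_2\ge n\|m\|_2$. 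The second factor is therefore at most $n^{-2s}\cdot 2^{2d+1}s/(2s-d)$ by Lemma~\ref{epstein}. For the weight in front, use $1+\|k\|_2^2\le 1+d n^2\le 2dn^2$, so $(1+\|k\|_2^2)^r\le(2d)^r n^{2r}$. Summing over $k\in K_n$, the first factors cover the disjoint residue classes, so their total is at most $\|f\|_{H^s}^2$. The aliasing part is thus at most $(2d)^r n^{2r-2s}\,2^{2d+1}s/(2s-d)\,\|f\|_{H^s}^2$.

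Adding the two parts, using $1\le 2^{2d+1}s/(2s-d)\,(2d)^r$, and taking square roots gives a constant $\sqrt{2\cdot 2^{2d+1}s/(2s-d)}\,(2d)^{r/2}\le 2^{d+1}\sqrt{s/(2s-d)}(2d)^{r/2}$ times $n^{r-s}=(n^d)^{(r-s)/d}$, which is the first claim. Setting $r=0$ gives the $L^2$ statement. For the last claim, take $r=s$ and use $\|S_n\|\le\|\mathrm{Id}\|+\|\mathrm{Id}-S_n\|\le 1+2^{d+1}\sqrt{s/(2s-d)}(2d)^{s/2}\le 2^{d+2}\sqrt{s/(2s-d)}(2d)^{s/2}$.

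The main obstacle is the aliasing step. It requires the lower bound $\|k+(2n+1)m\|_2\ge n\|m\|_2$, uniform in $k\in K_n$, so that Lemma~\ref{epstein} applies with the factor $n^{-2s}$. It also requires careful bookkeeping so that the constants combine to exactly $2^{d+1}$.
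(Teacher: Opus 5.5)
Your proof is correct and follows the paper's route. Both arguments split the error orthogonally into projection and aliasing parts and derive the aliasing formula from the character sum. Both then use weighted Cauchy--Schwarz with $\|k+(2n+1)m\|_2\ge n\|m\|_2$ and Lemma~\ref{epstein}, use disjointness of residue classes, and do the same constant bookkeeping to get $2^{d+1}\sqrt{s/(2s-d)}\,(2d)^{r/2}$. One cosmetic slip: the chain $|k_i+(2n+1)m_i|\ge(2n+1)|m_i|-n\ge n|m_i|$ fails in its second step when $m_i=0$, but the inequality you need, $|k_i+(2n+1)m_i|\ge n|m_i|$, is trivial in that case, so nothing is lost.
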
 

		\begin{proof} 
			Using Parseval's identity, we decompose the error
			\begin{equation}\label{eq:decomp}
				\|f-S_n f\|_{H^r(\mathds T^d)}^{2}
				= \|f-P_n f\|_{H^r(\mathds T^d)}^{2} + \|P_n f - S_n f\|_{H^r(\mathds T^d)}^{2} \,,
			\end{equation}
			where $P_n f = \sum_{\| k\|_\infty\le n} \hat f_{ k}\exp(2\pi\mathrm i\langle k,\cdot\rangle)$  is the $L^2$-orthogonal projection.
			The first summand is the error due to projection, which we bound as follows, using $\| k\|_2\ge\| k\|_\infty\ge n+1$ and $r-s\le 0$,
			\begin{align*}
				\|f-P_n f\|_{H^r(\mathds T^d)}^{2}
				&= \sum_{\| k\|_\infty > n} (1+\| k\|_2^2)^{r-s} (1+\| k\|_2^2)^s |\hat f_{ k}|^2 \\
				&\le (1+(n+1)^2)^{r-s} \sum_{\| k\|_\infty > n} (1+\| k\|_2^2)^s |\hat f_{ k}|^2 \\
				&\le n^{2(r-s)} \|f\|_{H^s(\mathds T^d)}^{2}  \le (2d)^r n^{2(r-s)} \|f\|_{H^s(\mathds T^d)}^{2} \,.
			\end{align*}
		The second summand of \eqref{eq:decomp} is due to aliasing effects of the approximation $S_n f$.
		By Lemma \ref{lem: interpolation coefficients}, its coefficients are $\widetilde f_{ k} = \frac{1}{N}\sum_{ j\in\mathcal I_n} f( x_{ j})\overline{\psi_{ k}( x_{ j})}$, $ k\in K_n$.
			Since $s>d/2$, the Fourier series of $f$ converges absolutely, so that we may insert it and interchange the order of summation,
			\begin{equation*}
				\widetilde f_{ k}
				= \sum_{ m\in\mathds Z^d} \hat f_{ m}\,
				\frac{1}{N}\sum_{ j\in\mathcal I_n} \psi_{ m- k}( x_{ j})
				= \sum_{ \ell\in\mathds Z^d} \hat f_{ k+(2n+1) \ell} \,,
			\end{equation*}
			by \eqref{eq: character}, since $ m- k\in (2n+1)\mathds Z^d$ if and only if $ m= k+ (2n+1) \ell$ for some $ \ell\in\mathds Z^d$.
			Hence
			\begin{equation}\label{eq:aliasing}
				S_n f = \sum_{\| k\|_\infty\le n} \Big( \sum_{ \ell\in\mathds Z^d} \hat f_{ k+(2n+1) \ell} \Big) \psi_{ k} \,,
			\end{equation}
			that is, the grid cannot separate frequencies which differ by an integer multiple of $(2n+1)$ in every component, and the $ k$-th coefficient of the interpolant collects all of them.

			Now we are able to bound the aliasing error.
			By \eqref{eq:aliasing} and $1+\| k\|_2^2\le 1+dn^2\le 2dn^2$ for $\| k\|_\infty\le n$ we have
			\begin{align*}
				&\|P_n f-S_n f\|_{H^r(\mathds T^d)}^2
				= \sum_{\| k\|_\infty\le n} (1+\| k\|_2^2)^r \Big| \sum_{ \ell\in\mathds Z^d\setminus\{ 0\}} \hat f_{ k+(2n+1) \ell} \Big|^2 \\
				&\le (2d)^r n^{2r} \sum_{\| k\|_\infty\le n} \Big| \sum_{ \ell\in\mathds Z^d\setminus\{ 0\}}
				(1+\| k+(2n+1) \ell\|_2^2)^{-\frac s2}
				(1+\| k+(2n+1) \ell\|_2^2)^{\frac s2}
				\hat f_{ k+(2n+1) \ell} \Big|^2 \,.
			\end{align*}
			Applying Cauchy--Schwarz inequality yields
			\begin{align*}
				\|P_n f-S_n f\|_{H^r(\mathds T^d)}^2 
				\le (2d)^r n^{2r} \!\!\sum_{\| k\|_\infty\le n}
				&\Big(\sum_{ \ell\in\mathds Z^d\setminus\{ 0\}}
				\!\!(1+\| k+(2n+1) \ell\|_2^2)^{-s}\Big) \\
				&\Big(\sum_{ \ell'\in\mathds Z^d\setminus\{ 0\}}
				\!\!(1+\| k+(2n+1) \ell'\|_2^2)^{s}
				|\hat f_{ k+(2n+1) \ell'}|^2\Big) \,.
			\end{align*}
			For the first inner sum we note that for $\| k\|_\infty\le n$ and $\ell\neq 0$ every component satisfies
            \begin{equation*}
                |k_i+(2n+1)\ell_i|\ge n|\ell_i|,
            \end{equation*}
            hence $\| k+(2n+1)\ell\|_2\ge n\|\ell\|_2$ and
			$(1+\| k+(2n+1) \ell\|_2^2)^{-s} \le \|n\ell\|_2^{-2s} = n^{-2s}\|\ell\|_2^{-2s}$.
			Thus we are able to apply Lemma~\ref{epstein} and obtain
			\begin{align*}
				&\|P_n f-S_n f\|_{H^r(\mathds T^d)}^2 \\
				&\le
				\frac{2^{2d+1}s}{2s-d}\,
				(2d)^r n^{2r} n^{-2s} \sum_{\| k\|_\infty\le n}
				\sum_{\ell'\in\mathds Z^d\setminus\{ 0\}}
				(1+\| k+(2n+1) \ell'\|_2^2)^{s}
				|\hat f_{ k+(2n+1) \ell'}|^2 \\
				&\le \frac{2^{2d+1}s}{2s-d}(2d)^r n^{2(r-s)}  \|f\|_{H^s(\mathds T^d)}^2 \,,
			\end{align*}
            where the last step uses that $( k, \ell)\mapsto  k+(2n+1) \ell$ is a bijection from $K_n\times\mathds Z^d$ onto $\mathds Z^d$, such that the sets $\{ k+(2n+1) \ell: \ell\neq 0\}$, $ k\in K_n$, are pairwise disjoint.
			Combining the inequalities for the projection error and the aliasing error yields
			\begin{equation*}
				\|\operatorname{Id}-S_n\|_{H^s(\mathds T^d)\to H^r(\mathds T^d)}^2
				\le \Big(1+\frac{2^{2d+1}s}{2s-d}\Big)(2d)^{r} n^{2(r-s)}
				\le \frac{2^{2d+2}s}{2s-d}(2d)^{r} n^{2(r-s)}  \, ,
			\end{equation*}
            where we used $\frac{2^{2d+1}s}{2s-d}\ge 1$ in the last step.
			The bound on $S_n$ follows from
            \begin{equation*}
                \|S_n\|_{H^s\to H^s}\le 1+\|\operatorname{Id}-S_n\|_{H^s\to H^s}
            \end{equation*}
            and the case $r=s$, since $2^{d+1}\sqrt{\frac{s}{2s-d}(2d)^s}\ge 1$.
		\end{proof} 

        Note, that the trigonometric interpolation operator $S_n$ does not utilize the smoothness information, while the rate in the upper bound improves for higher smoothness.
		Hence, Fourier-based interpolation is universal in the sense that the error bounds adapt to the smoothness of the target function $f$ without changing the implementation.
        
        We further briefly comment on extensions to other Sobolev spaces.
		
		\begin{remark}
			The above result holds for Sobolev spaces $H_p^s$ with $1<p<\infty$, where $H_p^s$ is defined by replacing the $L^2$-norm in \eqref{eq:Hs} by $L^p$.
			It holds when the error norm is replaced by $L^q$ with $1<q<\infty$ as well.
			For $p,q\in\{1,\infty\}$ the same rate can be achieved using de la Vallée Poussin kernels, cf.~\cite[Theorem~3.6.4]{Temlyakov18}.
		\end{remark} 
        
		\subsection{Error bounds for the Koopman approximants}
        
		We are now prepared to state a main result of this work. Combining the approximation properties of trigonometric interpolation with the boundedness of the Koopman operator yields an $L^2$ error bound for the Fourier-EDMD approximation. 
		
		\begin{theorem}[Error bound for Koopman approximation]\label{prop:error bound}
			Let $\sigma\in\mathbb{N}$, $\sigma>d/2$ and assume that $F\in~C^\sigma(\mathds{T}^d)$ is a $C^1$-diffeomorphism satisfying $c_0^{-1}\coloneqq\inf_{ x\in\mathds T^d} |\det DF( x)| > 0$. Then for all $s\in(d/2,\sigma]$, the Koopman approximation $\widehat{\mathcal{K}}_1$ defined in \eqref{eq: one-sided interpolation} satisfies the error bound
			\begin{align*}
				\|\mathcal{K}-\widehat{\mathcal{K}}_1\|_{H^s(\mathds{T}^d)\to L^2(\mathds{T}^d)} \le C  n^{-s}
			\end{align*}
			with
			\begin{equation*}
				C \coloneqq 2^{d+1}\sqrt{\frac{s}{2s-d}}\  A(s), \qquad  \ A(s) = (2\pi)^s \ c_0^{\frac{\sigma-s}{2\sigma}} \ (\sigma!)^{\frac{s}{2\sigma}} \ C_F^{\,s/\sigma}. 
			\end{equation*}
			The approximation $\widehat{\mathcal{K}}_2$, defined in \eqref{eq: two-sided interpolation}, satisfies the bound
			\begin{align*}
				\|\mathcal{K}-\widehat{\mathcal{K}}_2\|_{H^s(\mathds{T}^d)\to L^2(\mathds{T}^d)} \le \widetilde C  n^{-s}
			\end{align*}
			with
			\begin{equation*}
				\widetilde C \coloneqq 2^{d+1} \sqrt{\frac{s\,c_0}{2s-d}} + 2^{2d+3} \frac{s}{2s-d} (2d)^{s/2} A(s).
			\end{equation*}
		\end{theorem}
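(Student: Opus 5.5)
For $\widehat{\mathcal K}_1$ I would write $\mathcal K-\widehat{\mathcal K}_1=(\operatorname{Id}-S_n)\mathcal K$ and use submultiplicativity:
\[
\|\mathcal K-\widehat{\mathcal K}_1\|_{H^s\to L^2}\le \|\operatorname{Id}-S_n\|_{H^s\to L^2}\,\|\mathcal K\|_{H^s\to H^s}.
\]
For the first factor I apply Theorem~\ref{thm: bound interpolation error} with $r=0$, which gives $2^{d+1}\sqrt{s/(2s-d)}\,n^{-s}$; this needs $s>d/2$. For the second factor, Proposition~\ref{prop: koopman bound non-integer s} gives $A(s)$ for $s\in(0,\sigma]$. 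The case $s=\sigma$ is either included in that statement or follows directly from Theorem~\ref{thm:koopman-well-defined and bound}, since $(2\pi)^\sigma\sqrt{\sigma!}\,C_F=A(\sigma)$. Multiplying the two factors yields $C$. One point to check is that $S_n$ is well defined on $\mathcal Kf$, that is, that $\mathcal Kf$ is continuous. This holds because $\mathcal Kf\in H^s$ with $s>d/2$, so its Fourier series converges absolutely and $f$ has a continuous representative. Since $F$ is continuous, $f\circ F$ is then continuous as well.

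For $\widehat{\mathcal K}_2=S_n\mathcal K S_n$ I would split
\[
\mathcal K-S_n\mathcal K S_n=\mathcal K(\operatorname{Id}-S_n)+(\operatorname{Id}-S_n)\mathcal K S_n .
\]
The first term is bounded in $H^s\to L^2$ by $\|\mathcal K\|_{L^2\to L^2}\,\|\operatorname{Id}-S_n\|_{H^s\to L^2}$. Here $\|\mathcal K\|_{L^2\to L^2}\le\sqrt{c_0}$ by Theorem~\ref{thm:koopman-well-defined and bound} with $\sigma=0$ (change of variables), and this gives the term $2^{d+1}\sqrt{s c_0/(2s-d)}\,n^{-s}$. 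The second term is bounded by
\[
\|\operatorname{Id}-S_n\|_{H^s\to L^2}\,\|\mathcal K\|_{H^s\to H^s}\,\|S_n\|_{H^s\to H^s}\le 2^{d+1}\sqrt{\tfrac{s}{2s-d}}\,n^{-s}\cdot A(s)\cdot 2^{d+2}\sqrt{\tfrac{s}{2s-d}}\,(2d)^{s/2},
\]
using the stability bound on $S_n$ from Theorem~\ref{thm: bound interpolation error}. This product equals $2^{2d+3}\frac{s}{2s-d}(2d)^{s/2}A(s)\,n^{-s}$, which matches $\widetilde C$ exactly. Adding the two terms by the triangle inequality finishes the proof.

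The only delicate point is the first term of the splitting. There the Koopman operator acts on $(\operatorname{Id}-S_n)f$, which lies only in $H^s$, so I must check that $\mathcal K$ extends boundedly on $L^2$ and is compatible with pointwise composition on continuous functions. It is, since $f-S_nf$ is continuous and the $L^2$ change-of-variables bound applies directly. Everything else is a bookkeeping combination of the earlier results.
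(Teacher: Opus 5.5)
Your proposal is correct and follows the paper's proof essentially step for step. It uses the factorization $\mathcal K-\widehat{\mathcal K}_1=(\operatorname{Id}-S_n)\mathcal K$, the splitting $\mathcal K-\widehat{\mathcal K}_2=\mathcal K(\operatorname{Id}-S_n)+(\operatorname{Id}-S_n)\mathcal K S_n$, and the same ingredients: Theorem~\ref{thm: bound interpolation error} with $r=0$ together with its stability bound for $S_n$, Proposition~\ref{prop: koopman bound non-integer s}, and $\|\mathcal K\|_{L^2\to L^2}\le\sqrt{c_0}$, with the constants matching exactly. Your extra remarks on the endpoint $s=\sigma$ and on the continuity of $f\circ F$ are small refinements that the paper leaves implicit.
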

		
		\begin{proof}
            We abbreviate operator norms by dropping the domain, i.e. $\|\cdot\|_{H^s\to L^2} \coloneqq \|\cdot\|_{H^s(\T^d)\to L^2(\T^d)}$.
			The proof follows \cite[Theorem 3.4]{KohnPhil24}.
			For the approximant $\widehat{\mathcal{K}}_1=S_n\mathcal{K}$, we have
			\begin{equation*}
				\mathcal{K} - \widehat{\mathcal{K}}_1 = \mathcal{K} - S_n \mathcal{K} = (\operatorname{Id}-S_n)\mathcal{K}.
			\end{equation*}
			Substituting the bounds from Proposition \ref{prop: koopman bound non-integer s} and Theorem \ref{thm: bound interpolation error} yields
			\begin{align*}
				\| \mathcal{K} - \widehat{\mathcal{K}}_1 \|_{H^s\to L^2}
				&\leq \| \operatorname{Id}-S_n \|_{H^s\to L^2} \| \mathcal{K} \|_{H^s\to H^s}
				\leq 2^{d+1} \sqrt{\frac{s}{2s-d}}\ A(s)\ n^{-s} \ .
			\end{align*}
			For the approximant $\widehat{\mathcal{K}}_2=S_n\mathcal{K}S_n$, we have $$\mathcal{K} - \widehat{\mathcal{K}}_2 = \mathcal{K} - S_n \mathcal{K} S_n = \mathcal{K}(\operatorname{Id}-S_n) + (\operatorname{Id}-S_n)\mathcal{K}S_n.$$
			Therefore, the approximation error satisfies
			\begin{align*}
				\| \mathcal{K} - \widehat{\mathcal{K}}_2 \|_{H^s\to L^2}
				&\leq \| \mathcal{K}(\operatorname{Id}-S_n) \|_{H^s\to L^2} + \| (\operatorname{Id}-S_n)\mathcal{K}S_n \|_{H^s\to L^2} \\
				&\leq \| \mathcal{K} \|_{L^2\to L^2} \| \operatorname{Id}-S_n \|_{H^s\to L^2} + \| \operatorname{Id}-S_n \|_{H^s\to L^2} \| \mathcal{K} \|_{H^s\to H^s} \| S_n \|_{H^s\to H^s} \\
				&= (\| \mathcal{K} \|_{L^2\to L^2} + \| \mathcal{K} \|_{H^s\to H^s} \| S_n \|_{H^s\to H^s}) \cdot \| \operatorname{Id}-S_n \|_{H^s\to L^2}.
			\end{align*}
			With $\|\calK\|_{L^2\to L^2}\le\sqrt{c_0}$ from Theorem~\ref{thm:koopman-well-defined and bound} with $\sigma=0$, the interpolation error estimate from Theorem~\ref{thm: bound interpolation error} as well as the uniform boundedness of $S_n$ and $\mathcal{K}$ in \(H^s(\mathds T^d)\), the claim follows.
		\end{proof}
		We compare this result with error bounds for kernel-based approximation.
		\begin{remark}[Comparison with fill-distance-based kEDMD bounds]\label{rem: kEDMD comparison}
			The error bounds of Theorem~\ref{prop:error bound} rely crucially on the equispaced structure of the data set underlying trigonometric interpolation and thus differ from error bounds available for kernel EDMD (kEDMD), which apply to arbitrary data sets.
			In \cite{KohnPhil24}, deterministic $L^\infty$-error bounds for kEDMD are derived by identifying the regression problem with kernel interpolation in a reproducing kernel Hilbert space (RKHS) of Wendland functions, and bounding the resulting error in terms of the \emph{fill distance}
			\begin{equation*}
				h_{X} \coloneqq \sup_{ x\in\mathds T^d} \min_{ x_i\in X} \| x -  x_i\|_2
			\end{equation*}
			of an arbitrary data set $X=\{ x_1,\dots, x_M\}$. For Wendland kernels of smoothness order $\tau\in\N_0$, the resulting convergence rate is of order $h_X^{\tau-d/2}$.
			For our equidistant grid of $(2n+1)^d$ points, the fill distance satisfies $h_X\asymp n^{-1}$, so that a rate of $n^{-s}$ corresponds to the fill-distance rate $h_X^{s}$, rendering the error bound of Theorem~\ref{prop:error bound} structurally similar to \cite{KohnPhil24} in terms of the rate.
		\end{remark}

		\section{Numerical example: Kuramoto model on $\mathds{T}^d$}\label{sec: numerics}
		
		As a scalable test case for Fourier-EDMD we use the \emph{Kuramoto model} of coupled oscillators \cite{kuramoto1975international}.
		The model describes a system of $d$ coupled oscillators, each characterized by a phase $\theta_i \in \R / 2\pi\Z$, $i=1,\dots,d$.
		Since every phase is a point on the circle, the natural state space for the
		joint system is the $d$-torus, $\T^d$. 
		Each oscillator has an intrinsic natural frequency $\omega_i \in \mathbb{R}$, the rate at which it would rotate in the absence of coupling, and all oscillators are coupled via a \emph{coupling strength} $\kappa \ge 0$ that pulls pairs of oscillators toward phase alignment. 
		The classical, all-to-all coupled Kuramoto model is given by
		\begin{equation}
			\dot\theta_i = \nu_i + \frac{\kappa}{d}\sum_{j=1}^d \sin(\theta_j - \theta_i),
			\qquad i = 1,\dots,d,
			\label{eq:kuramoto-ode}
		\end{equation}
		where the $1/d$ normalization of the coupling term is the standard scaling that keeps the right-hand side well behaved as $d\to\infty$. 
		Rescaling $x_i = \theta_i/(2\pi)\in\T=\mathbb{R}/\mathbb{Z}$ and applying one explicit-Euler step with step size $h>0$ yields a discrete-time map $F:\T^d\to\T^d$,
		\begin{equation}\label{eq:kuramoto-map}
			F_i(x) = x_i + \omega_i + \frac{h\kappa}{2\pi d}\sum_{j=1}^{d}\sin\!\big(2\pi(x_j-x_i)\big) \pmod 1 ,
			\qquad i=1,\dots,d.
		\end{equation}
        The parameter $\omega_i\coloneqq \frac{h\nu_i}{2\pi}$ describes the phase increment of the $i$-th uncoupled oscillator per time step.
		For $d=1$ the coupling sum vanishes and
		\eqref{eq:kuramoto-map} reduces to a rigid rotation.
		For the numerical implementation, the coupling sum is evaluated using the equivalent identity
		\begin{equation}\label{eq: Kuramoto coupling sum}
			\frac{1}{d} \sum_{j=1}^d \sin\!\big(2\pi(x_j-x_i)\big) = \frac{1}{d} \left(\cos(2\pi x_i) \sum_{j=1}^d \sin\!\big(2\pi x_j\big) - \sin\!\big(2\pi x_i \big)\sum_{j=1}^d \cos\!\big(2\pi x_j\big)\right),
		\end{equation}
		replacing the $d\times d$ coupling matrix by two scalar sums per sampling point. This reduces both work and memory of evaluation of $F$ from $\mathcal O(d^2)$ to $\mathcal O(d)$.

        We briefly verify that $F$ satisfies the assumptions of Theorem \ref{prop:error bound}. Clearly, $F\in C^\infty(\T^d,\T^d)$. Writing $F( x) =  x + \omega + h\kappa g( x) \bmod 1$ with $g_i( x) = \frac{1}{2\pi d}\sum_{j=1}^d \sin(2\pi(x_j-x_i))$, $i=1,...,d$, we have $\partial_{x_j}g_i = \frac1d\cos(2\pi(x_j-x_i))$ for $j\neq i$ and $\partial_{x_i}g_i = -\frac1d\sum_{j\neq i}\cos(2\pi(x_j-x_i))$ for all $i$, such that every row of $Dg( x)$ has absolute row sum at most $2(d-1)/d$.
        Hence
		\begin{equation*}
			\|h\kappa\,Dg( x)\|_\infty \le \frac{2h\kappa(d-1)}{d} < 2h\kappa < 1
			\qquad\text{for} \quad  h\kappa<\tfrac{1}{2},
		\end{equation*}
		uniformly in $ x$, in the maximum row sum norm.
        Consequently the lift $ x\mapsto  x+\omega+ h\kappa g( x)$ of $F$ to $\R^d$ is the identity plus a contraction. Therefore, it is bijective on $\R^d$ and $DF = I+h\kappa\,Dg$ is invertible. Since $g$ is $1$-periodic, $F$ is a $C^\infty$-diffeomorphism of $\T^d$.
        Moreover, all eigenvalues $\mu$ of $h\kappa\,Dg( x)$ satisfy $|\mu|\le\|h\kappa\,Dg( x)\|_\infty<2h\kappa$ and thus $|1+\mu|>1-2h\kappa$.  Therefore
		\begin{equation*}
			\inf_{ x\in\T^d}|\det DF( x)| \ge (1-2h\kappa)^{d},
		\end{equation*}

        \begin{remark}\label{rem: step size}
			The condition $h\kappa<\frac12$ is equivalently given by $h<1/(2\kappa)$ and hence only restricts the step size and not the model.
			This is what one expects, since the flow map of \eqref{eq:kuramoto-ode} is a diffeomorphism for every $\kappa$ and the explicit-Euler map inherits this property as soon as the step size is chosen sufficiently small.
			Moreover $c_0\le(1-2h\kappa)^{-d}\to1$ as $h\to0$.
		\end{remark}
		
		The phase increments $\omega_i$ are drawn i.i.d.\ from $\mathrm{Unif}(0.05,0.15)$ to avoid resonances between oscillators, and the coupling per time step is fixed at $h\kappa=0.3$, which satisfies $h\kappa<1/2$.
		
		As observable to illustrate the error bound numerically, we choose a $d$-dimensional tensor-product B-spline of order two, $f:\T^d\to\R$, given by
		\begin{equation}\label{eq: B-spline}
			f(x) = \prod_{i=1}^d \sqrt{12} \min\{x_i,1-x_i\}.
		\end{equation}
		The normalization is chosen such that $\|f\|_{L^2(\T^d)}=1$ for all $d$. The observable is continuous but not differentiable at \(x_i=1/2\) and, as a $1$-periodic function, at $x_i=0$.
		To be precise, it satisfies $f\in H^s(\T^d)$ for all $s<\frac{3}{2}$, but $f\notin H^{3/2}(\T^d)$.
		Since the convergence rate in Theorem \ref{prop:error bound} of the Koopman approximation depends on the regularity of the observable this example allows us to assess whether the numerically observed rate coincides with the theoretically predicted one.
		
		\begin{remark}\label{rem: smoothness B-spline}
            Theorem~\ref{thm: bound interpolation error} and consequently Theorem~\ref{prop:error bound} require the observable to have Sobolev smoothness $s>d/2$. For the B-spline of order two in \eqref{eq: B-spline}, this restricts the applicability of the theory to $d\leq 2$. Nevertheless, we present the numerical results up to $d=5$, since the observed convergence rates are unaffected.
			Still, this condition cannot be removed from our worst-case analysis. It ensures both the convergence of the series in Lemma \ref{epstein} and the continuous embedding $H^s(\T^d) \hookrightarrow C(\T^d)$. For $s\le d/2$ point evaluation is unbounded on $H^s(\T^d)$ and hence no algorithm using finitely many  point evaluations can have a finite worst-case error on this space.
            However, this does not contradict the observed rates, since we do not consider a worst-case error over $H^s(\T^d)$, but the error for one particular observable.
            Since our observable \eqref{eq: B-spline}, the dictionary, and the grid $X$ have a tensor-product structure, the interpolation operator also factorises. 
            Hence, the $d$-dimensional interpolation error is governed by the corresponding one-dimensional ones, for which only $s>1/2$ is required.
		\end{remark}
		
		In the following, we describe the implementation of the two Fourier-EDMD approximations $\widehat \calK_1$ and $\widehat \calK_2$ of the Koopman operator and then investigate their convergence behavior as well as the computational cost.
        For this, for a given bandwidth $n$ we use the equispaced tensor-product grid $ X$ defined in~\eqref{eq: grid d-dim}, i.e., the grid contains $N=(2n+1)^d$ grid points and $N$ Fourier frequencies.  
        Both approximations $\widehat \calK_1$ and $\widehat \calK_2$ are built on this grid.
        They only differ in the way the composition of the observable with the dynamics is evaluated, depending on which data are assumed to be available.
		
        For the approximation $\widehat\calK_1 = S_n\calK$ defined in \eqref{eq: one-sided interpolation}, the values $f(F(x_j))$ are assumed to be given, i.e. the composition $f\circ F$ is evaluated directly on the equispaced grid.
		More precisely, the observable is sampled at image points $F(x_j)$ and a single $d-$dimensional FFT returns the corresponding discrete Fourier coefficients $\widetilde{(f\circ F)}_k$ of the Koopman image. 
		Neither the matrix $L_X$ nor $L_{X^+}$ is ever explicitly formed.
		Since the number of grid points $N=(2n+1)^d$ increases exponentially in $d$, a direct construction of the full $d\times N$ coordinate array would become extremely expensive in higher dimensions. 
		Therefore, we evaluate the observable and the dynamics blockwise along one coordinate direction, using the representation \eqref{eq: Kuramoto coupling sum} in each block. 
		This reduces the size of the temporary coordinate arrays from $\mathcal O(dN)$ to $\mathcal O(dB(2n+1)^{d-1})$, where the block size $B$ is chosen from a prescribed memory budget.
		However, the function values as well as the corresponding Fourier coefficients still require $\mathcal O(N)$ storage.
		
		In contrast to $\widehat\calK_1$, for $\widehat\calK_2=S_n \calK S_n$ defined in \eqref{eq: two-sided interpolation}, only point evaluations $f(x_j)$ of the observable on the uniform grid are available. 
		Hence, we first replace $f$ by its interpolant $S_nf$ and then evaluate the Fourier representation of $S_nf$ directly at image points $F(x_j)$,
		\begin{equation*}
			(S_nf)(F(x_j)) = \sum_{k\in K_n} \widetilde f_k \exp(2\pi i \langle k,F(x_j)\rangle).
		\end{equation*}
		This corresponds exactly to the multiplication with the matrix $L_{X^+}$ discussed after \eqref{eq: two-sided interpolation}.
		Since the points $F(x_j)$ are in general not equispaced, this evaluation is carried out by a type-2 nonuniform FFT (NFFT/NUFFT), see \cite{potts2018numerical}. 
		The resulting values are then transformed back by a second $d$-dimensional FFT.
		For the NFFT step we use the NFFT3 library \cite{keiner2009nfft} through its Python interface pyNFFT3\footnote{see \url{https://pypi.org/project/pyNFFT3/}}. 
        We set the cut-off $m=7$ of the window function and the oversampling factor $\sigma \ge 2$. For these parameters, the relative error of the NFFT stays below $10^{-12}$, i.e. the approximation error is not influenced by the NFFT.
        Although the NFFT3 library is not restricted to a particular dimension, the approximant $\widehat\calK_2$ is illustrated only for $d\le3$.
        The cost of one NFFT contains a factor $(2m+2)^d$ stemming from the support of the window function. Consequently, for $d\ge4$ the NFFT dominates the overall cost and $\widehat{\calK}_2$ is no longer computationally practical at the bandwidths considered here.
		As for $\widehat\calK_1$, the points $F(x_j)$ are generated blockwise, such that the full $d\times N$ coordinate array is never stored simultaneously.
		In summary, the computation of $\widehat\calK_1$ requires only one FFT and one evaluation of the composition $f \circ F$ on $X$, whereas $\widehat\calK_2$ requires two FFTs, one evaluation of the dynamics $F$ on the grid and one NFFT. That is, $\widehat\calK_2$ costs one NFFT and one FFT more than $\widehat\calK_1$.
		
        In order to quantify the convergence, we consider the relative $L^2$-error
		\begin{equation}\label{eq: relative error}
			\frac{\|\calK f - \widehat\calK_if\|_{L^2(\T^d)}}{\|\calK f \|_{L^2(\T^d)}}
			\quad i=1,2
		\end{equation}
        since we compare the approximation results across different dimensions $d$. 
		We estimate both norms using Monte-Carlo quadrature
		\begin{equation*}
			\|\calK f - \widehat\calK_if\|_{L^2(\T^d)}
			\approx  \left(\frac{1}{M} \sum_{m=1}^{M} |f(F(z_m)) - \widehat\calK_i f(z_m) |^2\right)^{1/2},
		\end{equation*}
		where $z_1,...,z_M$ are drawn independently and uniformly from $\T^d$.
		Hence, the error is evaluated independently of the grid $X$ that was used to construct the Koopman approximation and in particular does not vanish at the interpolation nodes.
        Evaluating $\widehat\calK_if$, $i=1,2$ at $M$ arbitrary points costs $\calO(MN)$, i.e. $M$ would have to be reduced as the dictionary grows. This can be avoided by performing this evaluation with type-2 NFFT as well, at cost $\calO(N \log N + M)$ with the same NFFT parameters as above. 
        With this, we can choose up to dimension $d=3$ and for all bandwidths $M=10^5$ test points. 
        For $d\ge4$ the error is evaluated directly and $M$ is chosen between $2\cdot10^3$ and $5\cdot10^3$ so that $MN$ stays bounded.

		Figure \ref{fig:errors} shows the relative $L^2$-errors \eqref{eq: relative error} of the approximants $\widehat \calK_1$ and $\widehat \calK_2$ for the B-spline observable~\eqref{eq: B-spline}.
        Since $f\in H^s(\T^d)$ for all $s<3/2$, Theorem \ref{prop:error bound} predicts the rate $n^{-s}$ for
        every $s<3/2$. In the top row, the error decay in terms of the bandwidth $n$ is illustrated. The five curves are parallel showing that the predicted rate $n^{-s}$, $s<3/2$, is attained independently of $d$. 
        The measured rates lie between $-1.46$ and $-1.56$ in all dimensions $d=1,\dots,5$ (see Table \ref{tab:rates}) and for both approximants, i.e.\ they attain the predicted rate up to the accuracy of the Monte-Carlo error estimate.
        Notably this includes $d\ge3$, where the assumption $s>d/2$ of Theorem \ref{thm: bound interpolation error} is violated (Remark \ref{rem: smoothness B-spline}).
        Expressed in terms of the total number $N=(2n+1)^d$ of samples, the same data give the rate $N^{-s/d}$ (bottom row of Figure \ref{fig:errors}), which is the familiar curse of dimensionality for isotropic Sobolev smoothness.
		
		% shared style for all four panels of both figures
		\pgfplotsset{
			edmdpanel/.style={
				width=0.44\textwidth, height=0.26\textwidth,
				xmode=log, ymode=log,
				grid=both, grid style={gray!22},
				tick align=outside, ylabel near ticks, xlabel near ticks,
				title style={yshift=-0.5ex},
                every axis plot/.append style={line width=0.7pt},
				cycle list={
					{blue,   mark=*,         mark size=1.5pt},
					{red,    mark=square*,   mark size=1.5pt},
					{teal,   mark=triangle*, mark size=1.8pt},
					{orange, mark=diamond*,  mark size=1.8pt},
					{violet, mark=pentagon*, mark size=1.8pt}},
			},
			edmdref/.style={black, densely dashed, line width=0.8pt, no marks, forget plot},
		}
		
		% =============================================================================
		% Figure 1 -- relative L^2 error
        %=============================================================================
		\begin{figure}[htb]
			\centering
			\begin{tikzpicture}
				\begin{groupplot}[
					edmdpanel,
					group style={group size=2 by 2, horizontal sep=1.1cm, vertical sep=1.5cm,
						y descriptions at=edge left},
					xmin=1.5, xmax=7000, ymin=2e-7, ymax=0.6,
					]
					% ---------------------------------------------------------------- row 1: vs n
					\nextgroupplot[
					title={$\widehat{\calK}_1 = S_n\calK$},
					xlabel={bandwidth $n$}, ylabel={relative $L^2$ error},
					legend to name=leg-errors, legend columns=6,
					legend style={font=\footnotesize, draw=none,
						/tikz/every even column/.append style={column sep=6pt}},
					]
					\addplot table[x=n, y=rel1] {bspline2_d1_mac1.dat}; \addlegendentry{$d=1$}
					\addplot table[x=n, y=rel1] {bspline2_d2_mac1.dat}; \addlegendentry{$d=2$}
					\addplot table[x=n, y=rel1] {bspline2_d3_mac1.dat}; \addlegendentry{$d=3$}
					\addplot table[x=n, y=rel1] {bspline2_d4_mac1.dat}; \addlegendentry{$d=4$}
					\addplot table[x=n, y=rel1] {bspline2_d5_mac1.dat}; \addlegendentry{$d=5$}
                    % reference line
					\addplot[edmdref, domain=2:5000, samples=2] {0.2*x^(-1.5)};
					\addlegendimage{black, densely dashed, line width=0.8pt, no marks} \addlegendentry{reference}
					\node[anchor=west, font=\scriptsize] at (axis cs:30,3e-6) {$n^{-3/2}$};
					
					\nextgroupplot[
					title={$\widehat{\calK}_2 = S_n\calK S_n$},
					xlabel={bandwidth $n$},
					]
					\addplot table[x=n, y=rel2] {bspline2_d1_mac1.dat};
					\addplot table[x=n, y=rel2] {bspline2_d2_mac1.dat};
					\addplot table[x=n, y=rel2] {bspline2_d3_mac1.dat};
					\addplot[edmdref, domain=2:5000, samples=2] {0.24*x^(-1.5)};   
					\node[anchor=west, font=\scriptsize] at (axis cs:30,3e-6) {$n^{-3/2}$};
					
					% ---------------------------------------------------------------- row 2: vs N
					\nextgroupplot[
					xlabel={dictionary size $N=(2n+1)^d$}, ylabel={relative $L^2$ error},
					xmin=3, xmax=3e7,
					]
					\addplot table[x=Nd, y=rel1] {bspline2_d1_mac1.dat};
					\addplot table[x=Nd, y=rel1] {bspline2_d2_mac1.dat};
					\addplot table[x=Nd, y=rel1] {bspline2_d3_mac1.dat};
					\addplot table[x=Nd, y=rel1] {bspline2_d4_mac1.dat};
					\addplot table[x=Nd, y=rel1] {bspline2_d5_mac1.dat};
					% one reference N^{-3/(2d)} per curve
					\addplot[edmdref, domain=5:8193,      samples=2] {0.783*x^(-1.5)};
					\addplot[edmdref, domain=25:1.05e6,   samples=2] {1.075*x^(-0.75)};
					\addplot[edmdref, domain=125:2.15e6,  samples=2] {1.361*x^(-0.5)};
					\addplot[edmdref, domain=625:5.76e6,  samples=2] {1.525*x^(-0.375)};
					\addplot[edmdref, domain=3125:9.77e6, samples=2] {1.785*x^(-0.3)};
					\node[anchor=west, font=\scriptsize] at (axis cs:8,3e-6) {$N^{-3/(2d)}$};
					
					\nextgroupplot[
					xlabel={dictionary size $N=(2n+1)^d$},
					xmin=3, xmax=3e7,
					]
					\addplot table[x=Nd, y=rel2] {bspline2_d1_mac1.dat};
					\addplot table[x=Nd, y=rel2] {bspline2_d2_mac1.dat};
					\addplot table[x=Nd, y=rel2] {bspline2_d3_mac1.dat};
					\addplot[edmdref, domain=5:8193,     samples=2] {0.951*x^(-1.5)};
					\addplot[edmdref, domain=25:1.05e6,  samples=2] {1.464*x^(-0.75)};
					\addplot[edmdref, domain=125:2.15e6, samples=2] {1.818*x^(-0.5)};
					\node[anchor=west, font=\scriptsize] at (axis cs:8,3e-6) {$N^{-3/(2d)}$};
				\end{groupplot}
			\end{tikzpicture}
			\\[0.4ex]
			\ref{leg-errors}
			\caption{Relative $L^2$-error of the two Koopman approximants for the observable \eqref{eq: B-spline} and the Kuramoto map \eqref{eq:kuramoto-map}; left column $\widehat{\calK}_1$, right column $\widehat{\calK}_2$ (available for $d\le3$).
			Top row: in terms of the bandwidth $n$;
			Bottom row: in terms of the number $N=(2n+1)^d$ of samples. 
            }
			\label{fig:errors}
		\end{figure}

        Table \ref{tab:rates} makes the comparison of the observed convergence rate with the theoretical one quantitative. It shows, for each dimension and each approximant, the decay rate of the measured error against the bandwidth in a log-log fit.
        For both approximants and all bandwidths $n$, the rates lie between $-1.46$ and $-1.56$ against the predicted $-s$ with $s<3/2$.
        Moreover, the last column provides information that is not visible in Figure \ref{fig:errors} due to scaling.
        It gives the range of the ratio $\|\calK f-\widehat\calK_2f\|_{L^2}/\|\calK f-\widehat\calK_1f\|_{L^2}$. This ratio stays bounded instead of growing with $n$, and shows in particular that the rates of the two approximants differ only by a constant factor. This is exactly the behavior predicted by Theorem \ref{prop:error bound}, in which $\widehat\calK_1$ and $\widehat\calK_2$ have the same rate $n^{-s}$ and differ only in the constants $C$ and $\widetilde C$.

		% =============================================================================
		%  Table 1 =============================================================================
		\begin{table}[htb]
			\centering
            \caption{Observed convergence rates for the observable \eqref{eq: B-spline}.}
			\label{tab:rates}
			\begin{tabular}{cccccc}
				\hline
				$d$ & bandwidth & $N_{\max}$
				& rate $\widehat{\calK}_1$ & rate $\widehat{\calK}_2$
				& error ratio \\
				\hline
				$1$ & $8\le n\le 4096$ & $8\,193$      & $-1.49$ & $-1.50$ & $1.01$--$1.40$ \\
				$2$ & $8\le n\le 512$  & $1\,050\,625$ & $-1.49$ & $-1.48$ & $1.27$--$1.36$ \\
				$3$ & $8\le n\le 64$   & $2\,146\,689$ & $-1.46$ & $-1.48$ & $1.33$--$1.42$ \\
				$4$ & $8\le n\le 24$   & $5\,764\,801$ & $-1.48$ & --      & --                   \\
				$5$ & $4\le n\le 12$   & $9\,765\,625$ & $-1.56$ & --      & --                   \\
				\hline
			\end{tabular}
		\end{table}

        Figure \ref{fig:times} shows the corresponding computation times.
        By the discrete Fourier structure of $L_X$ established in Section \ref{sec: Fourier EDMD}, both approximants are expected to cost $\calO(N\log N)$.
        The top row shows the growth in the bandwidth $n$, the bottom row the same data against the dictionary size $N$.
        The reference lines are drawn only over the range in which the runtime follows the asymptotic behavior and is not dominated by the constant overhead. They are of the form $C\,N\log N$, in the top row with one constant per dimension and in the bottom row with a single constant for all dimensions. 
        % That is, the cost is governed by the dictionary size, not by the dimension. 
        For $\widehat\calK_1$ this confirms that the cost is governed by the dictionary size, not by the dimension. However, for $\widehat\calK_2$ the constant grows with $d$, again through the window function of the NFFT.
        For the largest bandwidth in each dimension $d\le 3$, building $\widehat\calK_2$ is a factor of $3.5$ (for $d=1,2$) to $10.3$ (for $d=3$) more expensive than $\widehat\calK_1$, reflecting the NFFT and the additional FFT it performs.
        The largest dictionary, $d=5$ and $n=12$, uses $N = 25^5 = 9\,765\,625$ Fourier modes and takes $1.4$~s.
        The corresponding EDMD matrix has $9.5\cdot10^{13}$ entries, about $1.5$ PB in complex double precision, whereas the matrix-free evaluation stores a coefficient array of $149$ MB.
        That this remains feasible in higher dimensions is due to the blockwise evaluation of the dynamics and the observable, which never stores the full $d\times N$ coordinate array.

        % =============================================================================
		% Figure 2 -- computation time
		% =============================================================================
		\begin{figure}[H]
			\centering
			\begin{tikzpicture}
				\begin{groupplot}[
					edmdpanel,
					group style={group size=2 by 2, horizontal sep=1.1cm, vertical sep=1.5cm,
						y descriptions at=edge left},
					xmin=1.5, xmax=7000, ymin=2e-5, ymax=40,
					]
					% ---------------------------------------------------------------- row 1: vs n
					\nextgroupplot[
					title={$\widehat{\calK}_1 = S_n\calK$},
					xlabel={bandwidth $n$}, ylabel={computation time [s]},
					legend to name=leg-times, legend columns=6,
					legend style={font=\footnotesize, draw=none,
						/tikz/every even column/.append style={column sep=6pt}},
					]
					\addplot table[x=n, y=time1] {bspline2_d1_mac1.dat}; \addlegendentry{$d=1$}
					\addplot table[x=n, y=time1] {bspline2_d2_mac1.dat}; \addlegendentry{$d=2$}
					\addplot table[x=n, y=time1] {bspline2_d3_mac1.dat}; \addlegendentry{$d=3$}
					\addplot table[x=n, y=time1] {bspline2_d4_mac1.dat}; \addlegendentry{$d=4$}
					\addplot table[x=n, y=time1] {bspline2_d5_mac1.dat}; \addlegendentry{$d=5$}

					\addplot[edmdref, domain=512:4096, samples=20]{1.31e-8*(2*x+1)^1*ln((2*x+1)^1)};
					\addplot[edmdref, domain=64:512,   samples=20]{5.28e-9*(2*x+1)^2*ln((2*x+1)^2)};
					\addplot[edmdref, domain=24:64,    samples=20]{6.90e-9*(2*x+1)^3*ln((2*x+1)^3)};
					\addplot[edmdref, domain=8:24,     samples=20]{8.60e-9*(2*x+1)^4*ln((2*x+1)^4)};
					\addplot[edmdref, domain=2:12,     samples=20]{1.43e-8*(2*x+1)^5*ln((2*x+1)^5)};
					\addlegendimage{black, densely dashed, line width=0.8pt, no marks} \addlegendentry{reference}
					
					\nextgroupplot[
					title={$\widehat{\calK}_2 = S_n\calK S_n$},
					xlabel={bandwidth $n$},
					]
					\addplot table[x=n, y=time2] {bspline2_d1_mac1.dat};
					\addplot table[x=n, y=time2] {bspline2_d2_mac1.dat};
					\addplot table[x=n, y=time2] {bspline2_d3_mac1.dat};
					\addplot[edmdref, domain=64:512, samples=20]{2.06e-8*(2*x+1)^2*ln((2*x+1)^2)};
					\addplot[edmdref, domain=24:64,  samples=20]{7.00e-8*(2*x+1)^3*ln((2*x+1)^3)};
					
					% ---------------------------------------------------------------- row 2: vs N
					\nextgroupplot[
					xlabel={dictionary size $N=(2n+1)^d$}, ylabel={computation time [s]},
					xmin=3, xmax=3e7,
					]
					\addplot table[x=Nd, y=time1] {bspline2_d1_mac1.dat};
					\addplot table[x=Nd, y=time1] {bspline2_d2_mac1.dat};
					\addplot table[x=Nd, y=time1] {bspline2_d3_mac1.dat};
					\addplot table[x=Nd, y=time1] {bspline2_d4_mac1.dat};
					\addplot table[x=Nd, y=time1] {bspline2_d5_mac1.dat};
					% reference line
					\addplot[edmdref, domain=1e4:1e7, samples=30] {7.69e-9*x*ln(x)};
					\node[anchor=north west, font=\scriptsize] at (axis cs:2e5,1.5e-2) {$N\log N$};
					
					\nextgroupplot[
					xlabel={dictionary size $N=(2n+1)^d$},
					xmin=3, xmax=3e7,
					]
					\addplot table[x=Nd, y=time2] {bspline2_d1_mac1.dat};
					\addplot table[x=Nd, y=time2] {bspline2_d2_mac1.dat};
					\addplot table[x=Nd, y=time2] {bspline2_d3_mac1.dat};
					\addplot[edmdref, domain=1e4:3e6, samples=30] {4.75e-8*x*ln(x)};
					\node[anchor=north west, font=\scriptsize] at (axis cs:2e5,6e-2) {$N\log N$};
				\end{groupplot}
			\end{tikzpicture}
			\\[0.4ex]
			\ref{leg-times}
			\caption{Computation time for constructing the two Koopman approximants excluding the error estimation; same data, colors and layout as in Figure \ref{fig:errors}.}
            \label{fig:times}
		\end{figure}

		\section{Conclusion}\label{sec: conclusion}
		
		We have studied extended dynamic mode decomposition with a Fourier dictionary on the torus, where the data are sampled on an equispaced tensor grid whose cardinality matches the dimension of the dictionary. To this end, we first showed that the EDMD least-squares problem admits a unique closed-form solution, % in this setting, 
        the inverse of the Fourier sampling matrix being, up to scaling, its adjoint. 
        This allowed us to identify the EDMD matrix with the matrix representation of trigonometric interpolation composed with the Koopman operator. 
        That is, Fourier-EDMD on equispaced grids coincides with trigonometric interpolation of the Koopman image. 
        Based on this identification, we derived error bounds of optimal order $n^{-s}$ in the bandwidth $n$ for observables of Sobolev smoothness $s>d/2$, both for the approximant which uses measurements of the observable along the dynamics and for the fully data-driven approximant, with constants that are explicit in terms of the map $F$, its smoothness, the dimension $d$ and $s$. Along the way, we provided an error estimate for trigonometric interpolation with explicit constants, boundedness of the Koopman operator on Fourier-based Sobolev spaces of fractional order and the isometric identification of these spaces with interpolation spaces. Moreover, the same structure yields a matrix-free implementation by means of the FFT and, for the nonequispaced image points, the NFFT, with quasi-linear cost in the dictionary size. We illustrated the results for the Kuramoto model on $\T^d$ up to $d=5$ with dictionaries of roughly $10^7$ modes, where the predicted rates were observed, for $d\ge3$ even beyond the range $s>d/2$ covered by our analysis, and the cost was confirmed to be quasi-linear in the dictionary size.
		
		% As future work we plan to replace the full tensor grid by lattice-based sampling schemes such as rank-1 lattices in combination with hyperbolic cross dictionaries, which promise to mitigate the exponential growth of the dictionary size in the dimension while preserving the FFT structure of the reconstruction.

		\bibliographystyle{plain}
		\bibliography{references}
		
	\end{document}